\def\le{\leqslant}
\def\ge{\geqslant}
\def\tr#1{\lfloor #1\rfloor}
\def\cl#1{\lceil #1\rceil}
\def\lpa#1{\bigl({#1}\bigr)}
\def\Lpa#1{\Bigl({#1}\Bigr)}
\def\llpa#1{\biggl({#1}\biggr)}
\def\LLT{\mathrm{LLT}}
\def\simeqq{\,\simeq\,}
\def\dd#1{{\,\rm d}#1}
\def\ve{\varepsilon}
\def\tr#1{\left\lfloor #1\right\rfloor}
\def\thick{0.25}
\def\maxHeight{5}
\def\mainColor{black}
\def\verofcha{tiny}
\def\ifsize{\ifthenelse{\equal{\verofcha}{tiny}}{\tiny}{\footnotesize}}
\newcommand{\test}[4]{
    \minipage[t]{1.8cm}
        \centering
        \begin{tikzpicture}[scale=0.2]
            \foreach \y [count=\x] in #1 {
                \coordinate (A\x) at ($( \x, \maxHeight - \y)$);
                \filldraw[\mainColor] ($(\x-\thick, 0)$) -- ($(\x+\thick, 0)$) --($(A\x) + (\thick, \thick)$) -- ($(A\x) + (-\thick, \thick)$) -- cycle;
            };
            \foreach \i/\j in #2 {
                \filldraw[\mainColor] ($(A\i) + (\thick, \thick)$) -- ($(A\i) + (\thick, -\thick)$) -- ($(A\j) + (-\thick, -\thick)$) -- ($(A\j) + (-\thick, \thick)$) -- cycle;
            };
        \end{tikzpicture}\\
        {#3}\\
        {#4}
    \endminipage\vspace{1pt}
}
\DeclareRobustCommand{\Stirling}{\genfrac\{\}{0pt}{}}
\DeclareRobustCommand{\stirling}{\genfrac[]{0pt}{}}
  \DeclareFontFamily{OMX}{MnSymbolE}{}
  \DeclareSymbolFont{largesymbolsMn}{OMX}{MnSymbolE}{m}{n}
  \DeclareFontShape{OMX}{MnSymbolE}{m}{n}{
      <-6>  MnSymbolE5
     <6-7>  MnSymbolE6
     <7-8>  MnSymbolE7
     <8-9>  MnSymbolE8
     <9-10> MnSymbolE9
    <10-12> MnSymbolE10
    <12->   MnSymbolE12}{}
  \DeclareFontShape{OMX}{MnSymbolE}{b}{n}{
      <-6>  MnSymbolE-Bold5
     <6-7>  MnSymbolE-Bold6
     <7-8>  MnSymbolE-Bold7
     <8-9>  MnSymbolE-Bold8
     <9-10> MnSymbolE-Bold9
    <10-12> MnSymbolE-Bold10
    <12->   MnSymbolE-Bold12}{}
  \DeclareMathDelimiter{\llangle}{\mathopen}{largesymbolsMn}{'164}
                                            {largesymbolsMn}{'164}
  \DeclareMathDelimiter{\rrangle}{\mathclose}{largesymbolsMn}{'171}
                                            {largesymbolsMn}{'171}
\newtheorem{thm}{Theorem}
\newtheorem{lmm}{Lemma}
\newtheorem{cor}{Corollary}
\newtheorem{prop}{Proposition}
\def\and{\mbox{\quad and \quad}}
\begin{document}
\begin{CJK}{UTF8}{ipxg}
\title[Bell numbers in Matsunaga's and Arima's
Genjik\={o} combinatorics]{Bell numbers in Matsunaga's and Arima's
Genjik\={o} combinatorics: \\ Modern perspectives and local limit theorems}

\let\origmaketitle\maketitle
\def\maketitle{
  \begingroup
  \def\uppercasenonmath##1{}
  \let\MakeUppercase\relax
  \origmaketitle
  \endgroup
}

\author{Xiaoling Dou, Hsien-Kuei Hwang, Chong-Yi Li}

\thanks{The research of the third author was partially supported
by an Investigator Award from Academia Sinica under the Grant
AS-IA-104-M03 and by Taiwan Ministry of Science and Technology under
the Grant MOST 108-2118-M-001-005-MY3.}

\address{Center for Data Science, Waseda University, 1-6-1
Nishiwaseda, Shinjuku-ku, Tokyo 169-8051, Japan}
\email{\textbf{dou@waseda.jp}}

\address{Institute of Statistical Science, Academia Sinica, Taipei,
115, Taiwan}
\email{\textbf{hkhwang@stat.sinica.edu.tw}}
\email{\textbf{chongyili@stat.sinica.edu.tw}}

\maketitle

\begin{abstract}
We examine and clarify in detail the contributions of Yoshisuke
Matsunaga (1694?--1744) to the computation of Bell numbers in the
eighteenth century (in the Edo period), providing modern perspectives to some unknown materials that are by far the earliest in the history of Bell numbers. Later clarification and developments by Yoriyuki Arima (1714--1783), and several new results such as the asymptotic distributions (notably the corresponding local limit theorems) of a few closely related sequences are also given.
\end{abstract}
\section{Introduction}

\subsection{Bell numbers}
The Bell numbers $B_n$, counting the total number of ways to
partition a set of $n$ labeled elements, are so named by Becker and
Riordan \cite{Becker1948}. While it is known that their first
appearance can be traced back to the Edo period in Japan (see
\cite{Gould1979,Knuth2011,Luschny2011}), the history of these numbers
is ``\emph{a tricky business}'' \cite[p.~105]{Pollard2003}, and
``\emph{the earliest occurrence in print of these numbers has never
been traced}'' \cite{Rota1964}; Rota added in \cite{Rota1964}:
``\emph{as expected, the numbers have been attributed to Euler, but
an explicit reference has not been given}'', obscuring further the
early history of Bell numbers. Furthermore, in the words of Bell
\cite{Bell1938}: the $B_n$ ``\emph{have been frequently investigated;
their simpler properties have been rediscovered many times}''. The
recurrent rediscoveries, as well as the wide occurrence in diverse
areas, in the last three centuries certainly testify the importance
and usefulness of Bell numbers. See \cite{Mendelsohn1951} for an
instance of a typical rediscovery, and the OEIS webpage of Bell
numbers \href{https://oeis.org/A000110}{OEIS A000110} for the diverse
contexts where they appear. In particular, Bell numbers rank the 26th
(among a total of 347,900+ sequences as of September 21, 2021) according to
the number of referenced sequences in the OEIS database.

Bell numbers can be characterized and computed in many ways;
indeed a few dozens of different expressions for characterizing $B_n$
are available on the OEIS webpage
\cite[\href{https://oeis.org/A000110}{A000110}]{oeis2021}. Among
these, one of the most commonly used that is also by far the earliest one,
often attributed to Yoshisuke Matsunaga's unpublished work in the
eighteenth century (see, e.g., \cite[p.~504]{Knuth2011} or
\cite[Theorem 1.12]{Mansour2016}), is the recurrence
\begin{align}\label{E:bell-formulae}
    B_n &= \sum_{0\le k<n}\binom{n-1}{k}B_{n-1-k} ,
\end{align}
for $n\ge1$ with $B_0=1$. A more precise reference of this recurrence
is Arima's book ``\emph{Sh\={u}ki Sanp\={o}}'' (\emph{Collections in
Arithmetics} \cite{Arima1769}). More precisely, Knuth writes (his
$\varpi_n$ is our $B_n$):
\begin{quote}\small
    ``Early in the 1700s, Takakazu Seki and his students began to
    investigate the number of set partitions $\varpi_n$ for
    arbitrary $n$, inspired by the known result that
    $\varpi_5=52$. Yoshisuke Matsunaga found formulas for the
    number of set partitions when there are $k_j$ subsets of
    size $n_j$ for $1\le j\le t$, with $k_1n_1+\cdots+k_tn_t=n$
    (see the answer to exercise 1.2.5--21). He also discovered
    the basic recurrence relation 7.2.1.5--(14), namely
    \[
         \varpi_{n+1}
         = \binom{n}{0}\varpi_n + \binom{n}{1}\varpi_{n-1} +
         \binom{n}{2}\varpi_{n-2} +\cdots+\binom{n}{n}\varpi_0,
    \]
    by which the values of $\varpi_n$ can readily be computed.

    Matsunaga's discoveries remained unpublished until Yoriyuki
    Arima's book \emph{Sh\=uki Sanp\=o}\/ came out in 1769.
    Problem $56$ of that book asked the reader to solve the
    equation ``$\varpi_n=678570$'' for $n$; and Arima's answer,
    worked out in detail (with credit duly given to Matsunaga),
    was $n=11$.''
\end{quote}

\hyphenation{Higashi-oka}

\subsection{Yoshisuke Matsunaga}

However, as will be clarified in this paper, Yoshisuke Matsunaga
({\footnotesize{松永良弼}})\footnote{For the reader's convenience,
Kanji characters will be added at their first occurrence whenever
possible in what follows because the correspondence between Japanese
romanization and the Kanji character is often not unique.} indeed
used a very different procedure (see Theorem~\ref{T:Matsunaga} below)
in his 1726 book \cite{Matsunaga1726} to compute $B_n$, which was
later expounded in detail and modified by Yoriyuki Arima
({\footnotesize{有馬頼徸}}) in his 1763 book \cite{Arima1763} (not
his 1769 \emph{Sh\={u}ki Sanp\={o}} \cite{Arima1769}), eventually led
to the recurrence \eqref{E:bell-formulae}. It would then be natural
to call the sequence $\binom{n-1}kB_{n-1-k}$ the \emph{Arima
numbers}; see Section~\ref{S:arima} for their distributional aspect.
Informative materials on the life and mathematical works of Matsunaga
can be found in the two books (in Japanese) by Fujiwara \cite
{Fujiwara1957} and by Hirayama \cite{Hirayama1987}, respectively.

Briefly, Yoshisuke Matsunaga (born in 1694? and died in 1744) was a
mathematician in the Edo period ({\footnotesize{江戸時代}}). His
original surname was Terauchi {(\footnotesize{寺内}}), and also known
under a few different names such as Heihachiro
{(\footnotesize{平八郎}}), Gonpei {(\footnotesize{權平}}), and
Yasuemon {(\footnotesize{安右衛門}}); other names used include
Higashioka {(\footnotesize{東岡}}), Tangenshi
{(\footnotesize{探玄子}}), etc. Matsunaga served first in the Arima
family in Kurume Domain ({\footnotesize{久留米藩}}); he also learned
\emph{Wasan} ({\footnotesize{和算}}, Japanese Mathematics) from
Murahide Araki ({\footnotesize{荒木村英}}) who was a disciple of
Takakazu Seki ({\footnotesize{関孝和}}), the founder of modern Wasan.
He then came to Iwakidaira Domain ({\footnotesize{磐城平藩}}) and was
employed by Masaki Naito ({\footnotesize{内藤政樹}}) in 1732. There,
he worked with Yoshihiro Kurushima ({\footnotesize{久留島義太}}), and
his research was believed to be influenced by the theory of Kenko
Takebe ({\footnotesize{建部賢弘}}) and other Seki disciples. He
developed and largely improved Seki's Mathematics. One of his
representative achievements is the calculation of $\pi$  (circumference-diameter ratio of a circle) to 51 digits
(of which the first 49 are correct; see \cite[p.~457]{Fujiwara1957}).
He is also known to compute the series expansions of trigonometric
functions such as sine, cosine, arc-sine, etc. For more information,
see \cite{Fujiwara1957,Hirayama1987}.
See also \href{https://www.ndl.go.jp/math/e/}{this webpage} for more information on \emph{Japanese Mathematics in the Edo Period}.

\subsection{The Genjik\={o} game.}

The set-partition combinatorics developed by the Wasanists in the Edo
period was largely motivated by the parlor game called
\emph{Genjik\={o}} ({\footnotesize{源氏香}}, literally Genji incense,
where Genji refers to the famous novel \emph{Genji Monogatari}
({\footnotesize{源氏物語}}), or \emph{The Tale of Genji} by
Murasaki-Shikibu, {\footnotesize{紫式部}}), as already mentioned in
the combinatorial literature; see \cite{Gould1979, Knuth2011,
Mansour2016} and the webpage \cite{Luschny2011}.

\begin{figure}[!htb]
    \begin{center}
        \begin{tabular}{cc}
        \includegraphics[height=12cm]{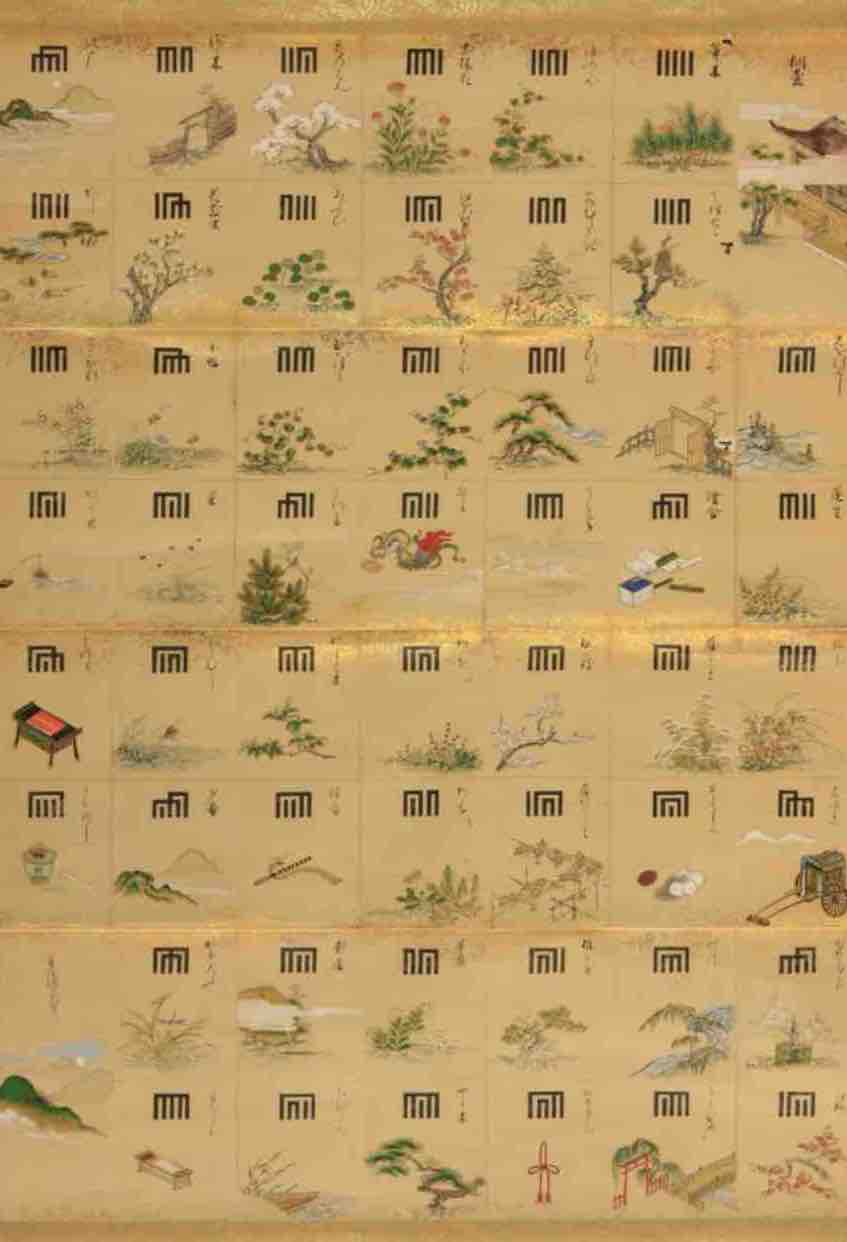}
        \end{tabular}
    \end{center}
    \caption{A Genjik\={o}nozu-painting by Mitsuoki Tosa
	({\footnotesize{土佐光起}}, 1617--1691) collected at Wadeda
	University Library (image courtesy of Waseda University
	Library).}\label{F:oldest-Genjimon}
\end{figure}

According to the preface of Arima's book \cite{Arima1763} (see also
\cite[p.~332]{Hayashi1931}):
\begin{quote}\small
	While Takakazu Seki initiated the study of Genjik\={o}
	combinatorics (or the \emph{techniques of separate-and-link}), it
	was Matsunaga and his contemporary Kurushima who probed its
	origin and developed fundamentally the techniques.
\end{quote}

The game is part of the Japanese K\={o}d\={o} ({\footnotesize{香道}}, 
or the Way of Fragrance, the same character ``k\={o}"
{\footnotesize{香}}, also means fragrance) to appreciate the
fragrances of the incense. It was established in the late Muromachi
period ({\footnotesize{室町時代}}) in the sixteenth century and
became popular in the upper class during the Edo period. 
It consists of the following steps:

\begin{enumerate}
	
\item Five different types of incense sticks are cut into five pieces
each;

\item Five of these 25 pieces are chosen to be smoldered;

\item Guests smell each incense and try to discern (or ``listen to" in
a silent ambience and calm mood) which among the five incenses chosen
are the same and which are different;

\item On the answer sheets, guests write their names and the
conjectured composition of the incenses already smoldered as
\emph{Genjik\={o}nozu} ({\footnotesize{源氏香の図}} or
\emph{Genjimon}, {\footnotesize{源氏紋}}, meaning the \emph{patterns
of Genjik\={o}}). The Genjik\={o}nozu is composed of five vertical
bars for the five incenses in right-to-left order; then link the
vertical bars with a horizontal line on top if the corresponding
incenses are thought to be the same.). See for
example Figure~\ref{F:oldest-Genjimon} for one of the earliest
paintings about Genjik\={o}nozu found so far.

\item The game has no winners or losers; if the answer is correct,
the five-stroke Kanji character {\footnotesize{玉}} (meaning
literally ``jade'' and figuratively something precious or beautiful)
is written on the answer sheet.

\end{enumerate}

In addition to the Genjik\={o}nozus used to represent the patterns of
incenses, a chapter name from \emph{The Tale of Genji} is also
associated with each of the 52 configurations of the five incenses,
not only for easier reference and use but sometimes also for a
reading of that chapter; see Figure~\ref{F:Genjimon1}. Partly
due to such an unusual association and their unusual aesthetic,
cryptic features, the Genjik\={o}nozus continue to be used in the
design of diverse modern applications such as patterns on kimono,
wrapping papers, folding screens, badges, etc.

{
\begin{figure}[!htb]
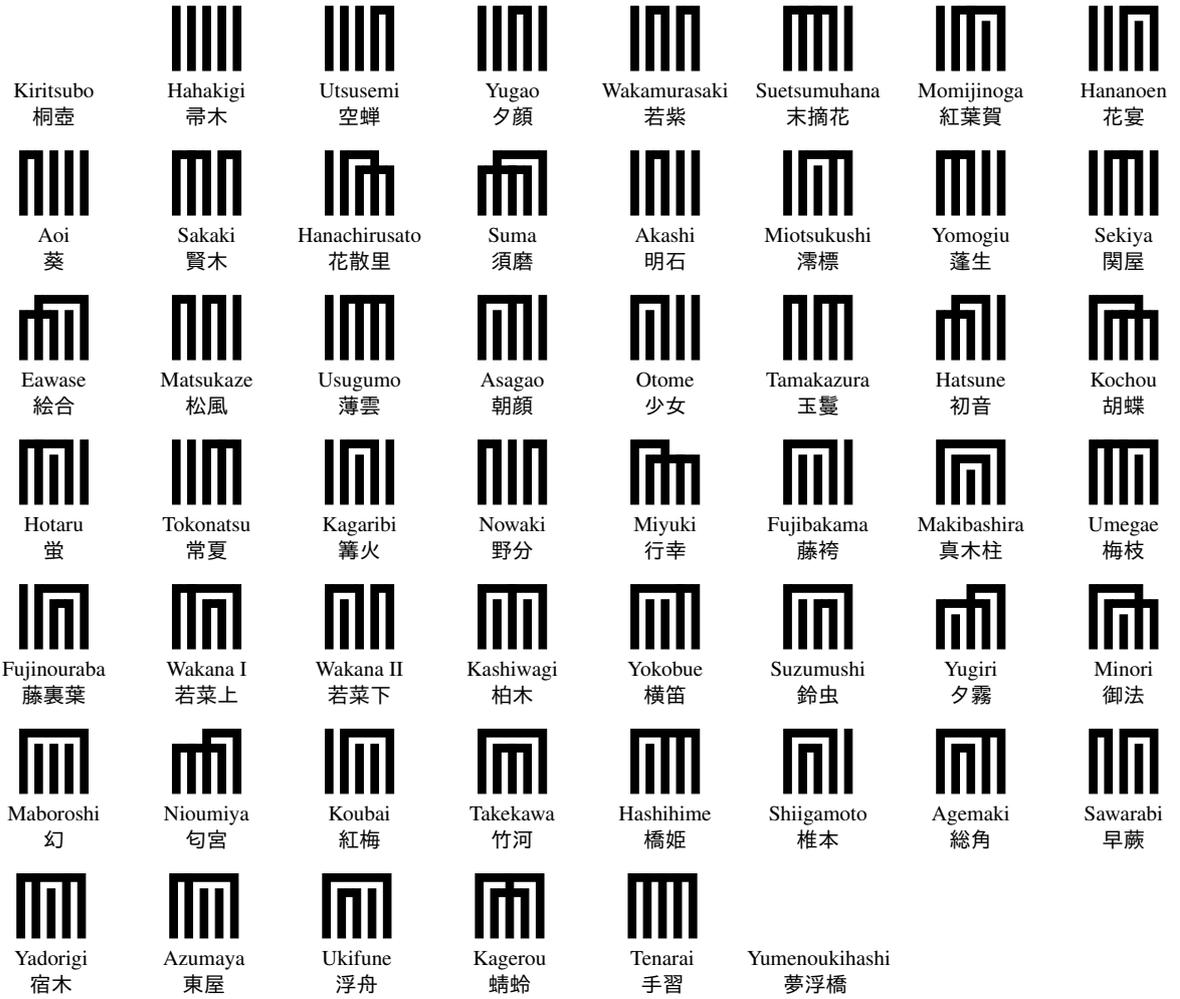
\ifsize
    \centering
\test{{}}{{}}{{Kiritsubo}}{桐壺}
\test{{1, 1, 1, 1, 1}}{{}}{{Hahakigi}}{帚木}
\test{{1, 1, 1, 1, 1}}{{4/5}}{{Utsusemi}}{空蝉}
\test{{1, 1, 1, 1, 1}}{{3/4}}{{Yugao}}{夕顔}
\test{{1, 1, 1, 1, 1}}{{2/3, 4/5}}{{Wakamurasaki}}{若紫}
\test{{1, 1, 1, 1, 1}}{{1/4}}{{Suetsumuhana}}{末摘花}
\test{{1, 1, 1, 2, 1}}{{2/5}}{{Momijinoga}}{紅葉賀}
\test{{1, 1, 1, 2, 1}}{{3/5}}{{Hananoen}}{花宴}
\test{{1, 1, 1, 1, 1}}{{1/2}}{{Aoi}}{葵}
\test{{1, 1, 1, 1, 1}}{{1/3, 4/5}}{{Sakaki}}{賢木}
\test{{1, 1, 2, 1, 2}}{{2/4, 3/5}}{{Hanachirusato}}{花散里}
\test{{2, 1, 2, 2, 1}}{{2/5, 1/4}}{{Suma}}{須磨}
\test{{1, 1, 1, 1, 1}}{{2/3}}{{Akashi}}{明石}
\test{{1, 1, 2, 1, 1}}{{2/5}}{{Miotsukushi}}{澪標}
\test{{1, 1, 1, 1, 1}}{{1/3}}{{Yomogiu}}{蓬生}
\test{{1, 1, 1, 1, 1}}{{2/4}}{{Sekiya}}{関屋}
\test{{2, 1, 2, 2, 1}}{{1/3, 2/5}}{{Eawase}}{絵合}
\test{{1, 1, 1, 1, 1}}{{1/2, 3/4}}{{Matsukaze}}{松風}
\test{{1, 1, 1, 1, 1}}{{2/5}}{{Usugumo}}{薄雲}
\test{{1, 2, 1, 1, 1}}{{1/4}}{{Asagao}}{朝顔}
\test{{1, 2, 1, 1, 1}}{{1/3}}{{Otome}}{少女}
\test{{1, 1, 1, 1, 1}}{{1/2, 3/5}}{{Tamakazura}}{玉鬘}
\test{{2, 1, 2, 1, 1}}{{1/3, 2/4}}{{Hatsune}}{初音}
\test{{1, 2, 2, 1, 2}}{{1/4, 2/5}}{{Kochou}}{胡蝶}
\test{{1, 1, 2, 1, 1}}{{1/4}}{{Hotaru}}{蛍}
\test{{1, 1, 1, 1, 1}}{{3/5}}{{Tokonatsu}}{常夏}
\test{{1, 1, 2, 1, 1}}{{2/4}}{{Kagaribi}}{篝火}
\test{{1, 1, 1, 1, 1}}{{1/2, 4/5}}{{Nowaki}}{野分}
\test{{1, 2, 1, 2, 2}}{{1/3, 2/5}}{{Miyuki}}{行幸}
\test{{1, 2, 2, 1, 1}}{{1/4}}{{Fujibakama}}{藤袴}
\test{{1, 2, 3, 2, 1}}{{1/5, 2/4}}{{Makibashira}}{真木柱}
\test{{1, 1, 1, 2, 1}}{{1/5}}{{Umegae}}{梅枝}
\test{{1, 1, 2, 2, 1}}{{2/5, 3/4}}{{Fujinouraba}}{藤裏葉}
\test{{1, 1, 2, 2, 1}}{{1/5, 3/4}}{{Wakana I}}{若菜上}
\test{{1, 2, 1, 1, 1}}{{1/3, 4/5}}{{Wakana II}}{若菜下}
\test{{1, 2, 1, 2, 1}}{{1/5}}{{Kashiwagi}}{柏木}
\test{{1, 2, 2, 1, 1}}{{1/5}}{{Yokobue}}{横笛}
\test{{1, 2, 2, 2, 1}}{{1/5, 3/4}}{{Suzumushi}}{鈴虫}
\test{{2, 3, 1, 2, 1}}{{1/4, 3/5}}{{Yugiri}}{夕霧}
\test{{1, 2, 3, 1, 2}}{{1/4, 2/5}}{{Minori}}{御法}
\test{{1, 2, 2, 2, 1}}{{1/5}}{{Maboroshi}}{幻}
\test{{2, 2, 1, 2, 1}}{{1/4, 3/5}}{{Nioumiya}}{匂宮}
\test{{1, 1, 2, 2, 1}}{{2/5}}{{Koubai}}{紅梅}
\test{{1, 2, 2, 2, 1}}{{1/5, 2/4}}{{Takekawa}}{竹河}
\test{{1, 2, 1, 1, 1}}{{1/5}}{{Hashihime}}{橋姫}
\test{{1, 2, 2, 1, 1}}{{1/4, 2/3}}{{Shiigamoto}}{椎本}
\test{{1, 2, 2, 1, 1}}{{1/5, 2/3}}{{Agemaki}}{総角}
\test{{1, 1, 1, 2, 1}}{{1/2, 3/5}}{{Sawarabi}}{早蕨}
\test{{1, 1, 2, 1, 1}}{{1/5}}{{Yadorigi}}{宿木}
\test{{1, 1, 2, 2, 1}}{{1/5}}{{Azumaya}}{東屋}
\test{{1, 2, 2, 2, 1}}{{1/5, 2/3}}{{Ukifune}}{浮舟}
\test{{1, 2, 1, 2, 1}}{{1/5, 2/4}}{{Kagerou}}{蜻蛉}
\test{{1, 1, 1, 1, 1}}{{1/5}}{{Tenarai}}{手習}
\test{{}}{{}}{$\text{Yumenoukihashi}$}{夢浮橋}
\test{{}}{{}}{$\ $}{}
\test{{}}{{}}{$\ $}{}
\caption{The 52 Genjik\={o} patterns arranged in the chapter order
(top-down and left-to-right) of the novel Genji Monogatari (54
chapters).} \label{F:Genjimon1}
\end{figure}
}

\section{Matsunaga's procedure to compute $B_n$}
\subsection{Matsunaga's 1726 book \cite{Matsunaga1726}}

Matsunaga wrote voluminously on a broad range of topics in Wasan
(more than 50 titles listed in Hirayama's book \cite{Hirayama1987}),
but none of these was printed and published during his life time due
partly to the tradition at that time; it is therefore common to find
different hand-written versions of the same book.

Among his books the one dated 1726 and entitled \emph{Danren
S\={o}jutsu} ({\footnotesize{断連総術}}, literally \emph{General
techniques of separate-and-link}) \cite{Matsunaga1726} (with a total
of 11 double-pages) is indeed completely devoted to the calculation
of Bell numbers, aiming specially to enumerate all possible ways to
connect and to separate a given number of incenses in the Genjik\={o}
game.

Our first aim in this paper is to provide more details contained in
this book \cite{Matsunaga1726}, and to highlight his procedure to
compute Bell numbers, which is nevertheless not the equation
\eqref{E:bell-formulae} as most authors believed and referenced. This
book, as well as a few others mentioned in this paper, is freely
available at the webpage of
\href{https://www.nijl.ac.jp/en/}{National Institute of Japanese
Literature} (NIJL). Since all these ancient books do not have page
numbers (and the page numbers differ in different versions of the
same book), all page numbers referenced in this paper are indeed the
(double) page order of the corresponding digital file at the NIJL
database. We will provide the corresponding URLs whenever possible.

Matsunaga's procedure was later examined and improved in great detail in
Arima's 1763 book \cite{Arima1763} on ``\emph{Variational techniques
of separate-and-link}'' ({\footnotesize{断連変局法}}, about 55
double-pages), and there we find the occurrence of the recurrence
\eqref{E:bell-formulae}; see Section~\ref{S:arima} for more details.

\subsection{Matsunaga numbers}
Let $s_{n,k}$ denote (signed) Stirling numbers of the first kind (see
\href{https://oeis.org/A008275}{A008275} and
\href{https://oeis.org/A048994}{A048994}), and $\stirling{n}{k}
= (-1)^{n-k}s_{n,k}$ the unsigned version (see
\href{https://oeis.org/A132393}{A132393}).
Denote by $[z^n]f(z)$ the coefficient of $z^n$ in the Taylor expansion
of $f(z)$.
\begin{thm}[Matsunaga, 1726 \cite{Matsunaga1726}]\label{T:Matsunaga}
For $n\ge1$
\begin{align}\label{E:matsunaga}
    B_n = 1+\frac1{n!}\sum_{1\le k\le n} M_{n,k} n^k,
\end{align}
where the Matsunaga numbers $M_{n,k}$ are defined recursively by
\begin{align}\label{E:bnk}
    M_{n,k} = nM_{n-1,k} + \beta_ns_{n,k}
\end{align}
for $1\le k\le n$ with the boundary conditions $M_{n,k}=0$ for
$n\le1$, $k\le0$ and $k>n$. Here $\beta_n := n![z^n]e^{e^z-1-z}$
counts the number of set partitions of $n$ elements without
singletons.
\end{thm}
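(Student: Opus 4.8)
The plan is to solve the recurrence~\eqref{E:bnk} explicitly, feed the closed form into the right-hand side of~\eqref{E:matsunaga}, and collapse everything to the classical singleton-removal decomposition of set partitions.

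\textbf{Step 1 (normalize the recurrence).} Setting $\widetilde M_{n,k}:=M_{n,k}/n!$ and dividing~\eqref{E:bnk} by $n!$ turns it into the telescoping relation $\widetilde M_{n,k}=\widetilde M_{n-1,k}+\beta_n s_{n,k}/n!$ for $n\ge2$, with $\widetilde M_{1,k}=0$ by the boundary conditions. Summing the telescope gives
\[
  \frac{M_{n,k}}{n!}=\sum_{1\le m\le n}\frac{\beta_m\,s_{m,k}}{m!}\qquad(1\le k\le n),
\]
where the lower limit may be written as $m=1$ since $s_{m,k}=0$ for $m<k$ and the stray term at $m=k=1$ vanishes because $\beta_1=0$.

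\textbf{Steps 2--4 (substitute, collapse, identify).} Plugging this into~\eqref{E:matsunaga} and interchanging the two finite sums,
\[
  1+\frac1{n!}\sum_{1\le k\le n}M_{n,k}\,n^k
  =1+\sum_{1\le m\le n}\frac{\beta_m}{m!}\sum_{1\le k\le m}s_{m,k}\,n^k .
\]
By the defining property $\sum_{0\le k\le m}s_{m,k}x^k=x(x-1)\cdots(x-m+1)$ of the (signed) Stirling numbers of the first kind, together with $s_{m,0}=0$ for $m\ge1$, the inner sum at $x=n$ equals the falling factorial $n(n-1)\cdots(n-m+1)=n!/(n-m)!$ (nonzero since $m\le n$); hence $\beta_m/m!$ times it is exactly $\binom nm\beta_m$, and the right-hand side becomes $1+\sum_{1\le m\le n}\binom nm\beta_m$. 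Since $\beta_0=1$ this is $\sum_{0\le m\le n}\binom nm\beta_m=\sum_{0\le j\le n}\binom nj\beta_{n-j}$, which equals $B_n$: a set partition of $[n]$ is obtained by choosing the $j$-element set of singletons and partitioning the remaining $n-j$ elements into blocks of size $\ge2$ in $\beta_{n-j}$ ways, equivalently the coefficient identity $e^{e^z-1}=e^z\cdot e^{e^z-1-z}$.

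\textbf{Expected obstacle.} None of this is deep; the only place needing genuine care is Step 1, namely pinning down the closed form for $M_{n,k}$ so that the telescope starts at the correct index in view of the boundary conditions $M_{n,k}=0$ for $n\le1$ (and checking that the $m=1$ term is harmless). Everything downstream is mechanical. A direct induction on $n$ is possible in principle, but the appearance of $n^k$ rather than $(n-1)^k$ in~\eqref{E:matsunaga} makes that route clumsier than the explicit-solution argument above.
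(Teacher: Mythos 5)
Your argument is correct and coincides with the paper's own proof: both iterate (telescope) the recurrence \eqref{E:bnk} to obtain $M_{n,k}=n!\sum_{k\le j\le n}\beta_j s_{j,k}/j!$, substitute into \eqref{E:matsunaga}, evaluate the inner sum via $\sum_k s_{m,k}z^k=z(z-1)\cdots(z-m+1)$ at $z=n$, and identify $\sum_{0\le j\le n}\binom{n}{j}\beta_j=B_n$ through the singleton-removal decomposition $e^{e^z-1}=e^z\cdot e^{e^z-1-z}$. The only difference is cosmetic bookkeeping of the lower summation index and the harmless $\beta_1=0$ term.
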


\begin{proof}
A direct iteration of $\eqref{E:bnk}$ gives
\begin{align}\label{E:ank-stir}
    M_{n,k} = n!\sum_{k\le j\le n}\frac{\beta_j}{j!}\, s_{j,k}.
\end{align}
Then, by the defining relation of Stirling numbers of the first kind,
\begin{align*}
    \sum_{1\le k\le m} s_{m,k}z^k
    = z(z-1)\cdots(z-m+1),
\end{align*}
we have
\begin{equation*}
\begin{split}
    1+\frac1{n!}\sum_{1\le k\le n}M_{n,k}n^k
    &= 1+\sum_{1\le k\le n}n^k
    \sum_{k\le j\le n}\frac{\beta_j}{j!}\, s_{j,k} \\
    &= 1+\sum_{1\le j\le n}\frac{\beta_j}{j!}
    \sum_{1\le k\le j}s_{j,k}n^k \\
    &= 1+\sum_{1\le j\le n}\frac{\beta_j}{j!}\,
    n(n-1)\cdots(n-j+1)\\
    &= n!\sum_{0\le j\le n}
    \frac{\beta_j}{j!}\cdot \frac1{(n-j)!}\\
    &= n![z^n]e^{e^z-1-z}\cdot e^z = B_n,
\end{split}
\end{equation*}
with $\beta_0=1$. Combinatorially, the last relation is equivalent to
splitting set partitions into those with blocks of size $1$ and those
without.
\end{proof}

Note that $B_n = \beta_{n+1}+\beta_n$, which, by a direct iteration,
gives
\begin{align}\label{E:beta-bell}
	\beta_n = \sum_{0\le j<n}(-1)^{n-1-j}B_j+(-1)^n
	\qquad(n\ge0).
\end{align}
The first few terms of $\beta_n$ are given by (see
\href{https://oeis.org/A000296}{OEIS A000296})
\[
    \{\beta_n\}_{n\ge1}
    = \{0,1,1,4,11,41,162,715, 3425, 17722, 98253, 580317,\dots\},
\]
and Table~\ref{T:ank} gives the first few rows of $M_{n,k}$ for
$n=1,\dots,7$; these numbers already appeared in \cite{Matsunaga1726}
and \cite{Arima1763}.
\begin{table}[!ht]
\begin{center}
\begin{tabular}{c|rrrrrrr}
	$n\backslash k$
	& $1$ & $2$ & $3$ & $4$ & $5$ & $6$ & $7$ \\ \hline
    $1$ & $0$ &&&&& \\
    $2$ & $-1$ & $1$ &&&& \\
    $3$ & $-1$ & $0$ & $1$ &&& \\
	$4$ & $-28$ & $44$ & $-20$ & $4$ && \\
    $5$ & $124$ & $-330$ & $285$ & $-90$ & $11$ & \\
    $6$ & $-4176$ & $9254$ & $-7515$ & $2945$ & $-549$ & $41$ \\
    $7$ & $87408$ & $-220990$ & $210483$ & $-98455$ & $24507$
    & $-3115$ & $162$
\end{tabular}
\end{center}
\caption{The values of $M_{n,k}$ for $n=1,\dots,7$ and $1\le k\le n$.
Each row sum is zero.}\label{T:ank}
\end{table}

\subsection{The ``non-conventional'' procedure to compute $B_n$}
Matsunaga's extraordinary procedure to compute $B_n$ is then as follows
(extraordinary in the sense that we have not found it in the combinatorial
literature).
\begin{itemize}
    \item Tabulate first $s_{n,k}$ \cite[pp.~\href{https://kotenseki.nijl.ac.jp/biblio/100235422/viewer/2}{2--3}]{Matsunaga1726};

    \item Compute $\beta_n$ by a direct exhaustive combinatorial
    enumeration \cite[pp.~\href{https://kotenseki.nijl.ac.jp/biblio/100235422/viewer/3}{3--6}]{Matsunaga1726} and
    \cite[pp.~\href{https://kotenseki.nijl.ac.jp/biblio/100235422/viewer/8}{8--10}]{Matsunaga1726};

    \item Tabulate $M_{n,k}$ by using the recurrence \eqref{E:bnk}
    \cite[p.~\href{https://kotenseki.nijl.ac.jp/biblio/100235422/viewer/7}{7}]{Matsunaga1726};

    \item Evaluate the polynomial (in $n$) $\sum_k M_{n,k}n^k$ by
    Horner's rule \cite[p.~\href{https://kotenseki.nijl.ac.jp/biblio/100235422/viewer/7}{7}]{Matsunaga1726}:
    \[
        \sum_{1\le k\le n} M_{n,k}n^k
        = n\lpa{M_{n,1}+n\lpa{M_{n,2}+\cdots
        +n(M_{n,n-1}+M_{n,n}n)}};
    \]
    \item Then dividing the above sum by $n!$ and adding $1$ yields
    $B_n$, where the values $B_2,\dots, B_8$ are given
    \cite[pp.~\href{https://kotenseki.nijl.ac.jp/biblio/100235422/viewer/10}{10--11}]{Matsunaga1726}.
\end{itemize}
Note that the origin of Horner's rule can be traced back to Chinese
and Persian Mathematics in the thirteenth century.

In particular,
\begin{small}
\begin{equation}\label{E:Bn-small}
\begin{split}
    B_2 &= 1+\frac1{2!}\cdot 2(\underline{-1}+\underline{1}\cdot 2)
    = 1+\frac{2\cdot 1}{2}=2\\
    B_3 &= 1+\frac1{3!}\cdot 3(\underline{-1}
    +3(\underline{0}+\underline{1}\cdot 3))
    = 1+\frac{3\cdot 8}{6} = 5\\
    B_4 &= 1+\frac1{4!}\cdot 4(\underline{-28}
    +4(\underline{44}+4(\underline{-20}+\underline{4}\cdot 4)))
    = 1+\frac{4\cdot 84}{24} = 15\\
    B_5 &= 1+\frac1{5!}\cdot 5(\underline{124}
    +5(\underline{-330}+5(\underline{285}
    +5(\underline{-90}+\underline{11}\cdot 5))))
    = 1+\frac{5\cdot 1224}{120} = 52,
\end{split}
\end{equation}
\end{small}
where the Matsunaga numbers (see Table~\ref{T:ank}) are underlined.

\begin{table}[!ht]
\begin{center}
\begin{tabular}{c|rrrrrr|c}
	$n\backslash k$
	& $1$ & $2$ & $3$ & $4$ & $5$ & $6$ & $B_n$ \\ \hline
    $2$ & $-2$ & $4$ &&&&& $2$ \\
    $3$ & $-3$ & $0$ & $27$ &&&& $5$\\
	$4$ & $-112$ & $704$ & $-1280$ & $1024$ &&& $15$\\
    $5$ & $620$ & $-8250$ & $35625$ & $-56250$ & $34375$ && $52$ \\
    $6$ & $-25056$ & $333144$ & $-1623240$ & $3816720$
    & $-4269024$ & $1912896$ & $203$
\end{tabular}
\end{center}
\caption{The values of $M_{n,k}n^k$ for $n=2,\dots,6$ and
$1\le k\le n$; they are mostly much larger in modulus than
$B_n$}\label{T:ank-nk}
\end{table}

\subsection{First appearance of $s_{n,k}$}
On the other hand, a careful reader will notice the time difference
between Matsunaga's use of $s_{n,k}$ in his 1726 book and Stirling's
introduction of these numbers in 1730. Indeed, Stirling numbers of
the first kind already appeared earlier in Seki's posthumous book
``\emph{Katsuy\=o Sanp\=o}'' ({\footnotesize{括要算法}},
\emph{Compendium of Arithmetics} \cite{Seki1712}) published in 1712, and thus were not new to the followers
of the Seki School. In particular, the recurrence formula
\begin{align*}
    s_{n,k} = s_{n-1,k-1}-(n-1)s_{n-1,k}
\end{align*}
is given in Seki's book (see Figure~\ref{F:seki-p33}). On the other hand, it is also known that
Stirling numbers of the first kind can be traced earlier back to
Harriot's manuscripts near the beginning of the 17th century; see
\cite{Knuth1992}.

\begin{figure}[!ht]
\begin{center}
\begin{minipage}[t]{0.45\textwidth}
\vspace{0pt}\hspace{0.5cm}
\includegraphics[height=5.5cm]{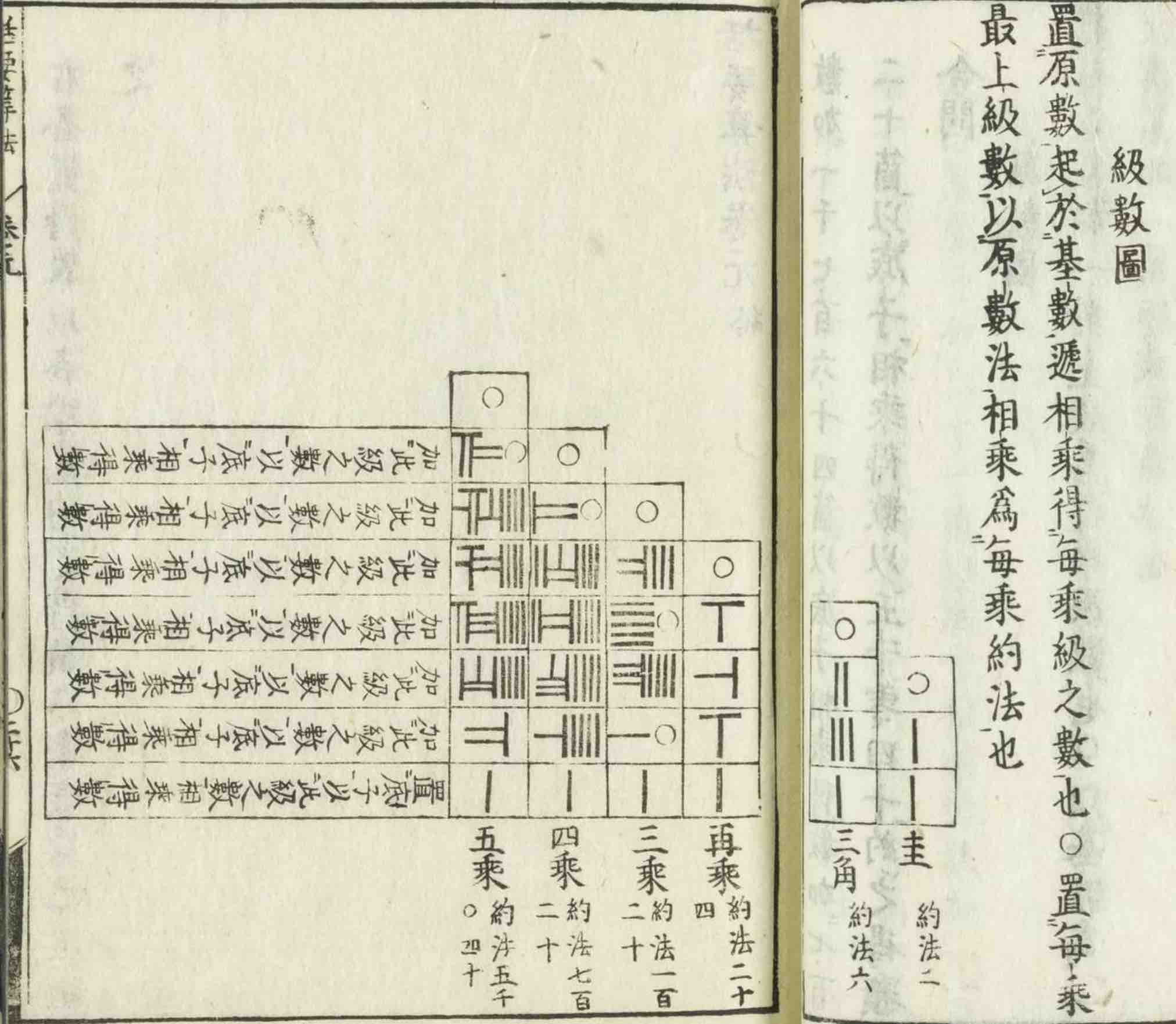}
\end{minipage}\;
\begin{minipage}[t]{0.45\textwidth}
\vspace{0.7cm}
\begin{tabular}{rrrrrr}
$0$ \\
$720$ & $0$ \\
$1764$ & $120$ & $0$ \\
$1624$ & $274$ & $24$ & $0$ \\
$735$ & $225$ & $50$ & $6$ & $0$ \\
$175$ & $85$ & $35$ & $11$ & $2$ & $0$ \\
$21$ & $15$ & $10$ & $6$ & $3$ & $1$ \\
$1$ & $1$ & $1$ & $1$ & $1$ & $1$ \\ \hline
$5040$ & $720$ & $120$& $\ 24$ & $\ \ 6$ & $\ \ 2$
\end{tabular}
\end{minipage}
\end{center}
\caption{Left: page \href{https://dl.ndl.go.jp/info:ndljp/pid/3508173/33}{33} (image from the digital collections of The National Diet Library of Japan) of Seki's book \cite{Seki1712} where
Stirling numbers of the first kind appear; the recurrence is
described on the leftmost margin. Right: the unsigned
Stirling numbers as displayed in Seki's table written in their modern
forms.}\label{F:seki-p33}
\end{figure}

Due to their diverse use in different contexts, the
Stirling numbers $s_{n,k}$ (\href{https://oeis.org/A008275}{A008275})
have a large number of variants; common ones collected in the OEIS
include the following ones:
\begin{itemize}
    \item additional zeroth column:
    (\href{https://oeis.org/A048994}{A048994}),
    \item unsigned version: $|s_{n,k}|$
    (\href{https://oeis.org/A130534}{A130534}),
    \item row-reversed: $s_{n,n+1-k}$
    (\href{https://oeis.org/A008276}{A008276}),
    \item multiples: $(n+1)\stirling{n}{k}$
    (\href{https://oeis.org/A180013}{A180013}), which enumerates the
    number of fixed points in permutations of $n$ elements with $k$
    cycles,
    \item unsigned, row-reversed: $|s_{n,n+1-k}|$
    (\href{https://oeis.org/A094638}{A094638}),
    \item unsigned, additional zeroth column:
    (\href{https://oeis.org/A132393}{A132393}).
\end{itemize}

\subsection{Inefficiency of the procedure}
While the proof of \eqref{E:matsunaga} is straightforward, the fact
that the circuitous expression \eqref{E:matsunaga} represents by far
the earliest way to compute $B_n$ comes as a surprise. Even more
surprising is that such a procedure is indeed extremely
\emph{inefficient} from a computational viewpoint, since it involves
computing large numbers with alternating signs (see
Table~\ref{T:ank-nk} and \eqref{E:Bn-small}), resulting in violent
cancellations as $n$ increases. In fact, since Stirling numbers of
the first kind may grow as large as $n!/\sqrt{\log n}$ in absolute
value near their peaks (see \cite{Hwang1995} or \eqref{E:mw1} below)
and $n^k$ is also factorially large for linear $k$, the cancellations
occur indeed at a \emph{factorial scale}. This might also explain why
Matsunaga computed $B_n$ only for $n\le 8$ in the end of
\cite{Matsunaga1726}. On the other hand, from \eqref{E:matsunaga}, we
see that
\[
    \sum_{1\le k\le n}M_{n,k}n^k = (B_n-1)n!.
\]
Thus if no better numerical procedure is introduced to handle the
calculation of the normalized sum $\sum_{1\le k\le n}M_{n,k}n^k/n!$,
then intermediate steps will involve numbers as large as $B_n\times
n!$, which grows like
\[
    \log \Lpa{\sum_{1\le k\le n}M_{n,k}n^k}
    \sim \log B_n n! \sim 2n\log n - n\log \log n -n,
\]
(see \eqref{E:Bn1} below), which is close to $2\log n!$.

So one naturally wonders whether Matsunaga was led to this procedure
by roughly following backward the same reasoning as that given in the
proof of
\eqref{E:matsunaga}. The inefficiency was noticed in later works such
as in Arima's book \cite{Arima1763}, and there the more efficient
recurrence \eqref{E:bell-formulae} is given. 
\section{Distribution of Matsunaga numbers}

The above numerical viewpoint for Matsunaga's procedure naturally
motivates the question: how does the numbers $|M_{n,k}|$ distribute
for large $n$ and varying $k$? This question also has its own
interest \emph{per se} in view of the historical importance of this
sequence.

\subsection{Identities}

We derive first a more practical expression for $|M_{n,k}|$ because
the sum expression \eqref{E:ank-stir} becomes less convenient with
the absolute value sign.

\begin{lmm} For $1\le k\le n$, $n\ge1$ and $(n,k)\ne(3,1)$,
\begin{align}\label{E:abs-Mnk}
    |M_{n,k}|
    = n!\sum_{k\le j \le n}\frac{(-1)^{n-j}\beta_j}{j!}
    \,|s_{j,k}|.
\end{align}	
\end{lmm}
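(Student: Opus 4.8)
The plan is to start from the already-established closed form \eqref{E:ank-stir}, namely
\[
    M_{n,k} = n!\sum_{k\le j\le n}\frac{\beta_j}{j!}\, s_{j,k},
\]
and compare it term-by-term with the right-hand side of \eqref{E:abs-Mnk}. Writing $s_{j,k}=(-1)^{j-k}|s_{j,k}|$, the sum in \eqref{E:ank-stir} becomes $n!\,(-1)^{-k}\sum_{k\le j\le n}(-1)^{j}\frac{\beta_j}{j!}|s_{j,k}|$, while the claimed expression is $n!\,(-1)^{n}\sum_{k\le j\le n}(-1)^{-j}\frac{\beta_j}{j!}|s_{j,k}|=n!\,(-1)^{n}\sum_{k\le j\le n}(-1)^{j}\frac{\beta_j}{j!}|s_{j,k}|$. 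Hence the two differ only by the global sign $(-1)^{n+k}$, so the identity \eqref{E:abs-Mnk} is equivalent to the sign statement
\[
    \operatorname{sgn}(M_{n,k}) = (-1)^{n-k}
    \qquad\text{whenever } M_{n,k}\ne0 .
\]
So the whole content of the lemma is: for $1\le k\le n$, $n\ge1$, and $(n,k)\ne(3,1)$, the Matsunaga number $M_{n,k}$ is nonzero and has sign $(-1)^{n-k}$.

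First I would dispose of the small cases $n\le 3$ by hand from Table~\ref{T:ank}: $M_{1,1}=0$ is covered by the convention, $M_{2,1}=-1$, $M_{2,2}=1$, $M_{3,2}=0$ (sign $(-1)^{3-2}=-1$ is irrelevant since the term vanishes — one should double-check whether the lemma's hypothesis needs to exclude $(3,2)$ as well, or whether the convention that a zero term contributes $0$ on both sides makes \eqref{E:abs-Mnk} trivially true there), $M_{3,3}=1$, and $M_{3,1}=-1$, whose sign is $(-1)^{3-1}=+1$ — this is exactly the excluded exceptional pair. For $n\ge4$ I would argue by induction on $n$ using the recurrence \eqref{E:bnk}, $M_{n,k}=nM_{n-1,k}+\beta_n s_{n,k}$. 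The two summands have signs $(-1)^{(n-1)-k}$ (by the induction hypothesis, when $M_{n-1,k}\ne0$) and $\operatorname{sgn}(s_{n,k})=(-1)^{n-k}$ (since $\beta_n>0$ for $n\ge2$); these are \emph{opposite}. So a naive sign-of-a-sum argument fails, and I expect this to be the main obstacle: I must show that the $\beta_n s_{n,k}$ term strictly dominates $nM_{n-1,k}$ in absolute value, i.e.
\[
    \beta_n\,|s_{n,k}| > n\,|M_{n-1,k}|
    \qquad(n\ge4,\ 1\le k\le n).
\]

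To prove this dominance inequality I would iterate \eqref{E:ank-stir} one step: $|M_{n-1,k}| = (n-1)!\bigl|\sum_{k\le j\le n-1}\frac{\beta_j}{j!}s_{j,k}\bigr| \le (n-1)!\sum_{k\le j\le n-1}\frac{\beta_j}{j!}|s_{j,k}|$, so it suffices to show $\frac{\beta_n}{n!}|s_{n,k}| > \sum_{k\le j\le n-1}\frac{\beta_j}{j!}|s_{j,k}|$, i.e. that the last term of the tail sum $\sum_{k\le j\le n}\frac{\beta_j}{j!}|s_{j,k}|$ exceeds the sum of all the preceding ones. I would control the ratio of consecutive terms
\[
    \frac{\beta_{j+1}|s_{j+1,k}|}{(j+1)\,\beta_j\,|s_{j,k}|}
\]
using the recurrence $|s_{j+1,k}| = |s_{j,k-1}| + j|s_{j,k}| \ge j|s_{j,k}|$, which gives $|s_{j+1,k}|/|s_{j,k}|\ge j$, together with a lower bound on $\beta_{j+1}/\beta_j$ (from the recurrence for $\beta_n$, or from the known asymptotics $\beta_n\sim B_n/$ something, one has $\beta_{j+1}/\beta_j\to\infty$, and certainly $\beta_{j+1}\ge (j-1)\beta_j$ or similar holds for small $j$ and can be bootstrapped). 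Combining, the ratio of the $(j+1)$-st term to the $j$-th term is at least roughly $j\beta_{j+1}/((j+1)\beta_j)$, which for $j\ge3$ is comfortably larger than $2$; a geometric-series comparison then shows the final term dominates the sum of the rest, completing the induction. I would also track the edge case $k=n$, where $M_{n,n}=\beta_n s_{n,n}=\beta_n>0$ directly (sign $(-1)^{n-n}=+1$, consistent), and $k=n-1$, handled similarly, so that the induction's base of the $k$-range is clean. The one genuinely delicate point is making the term-ratio estimate uniform in $k$ and valid already from $n=4$ onward; I expect the crude bounds $|s_{j+1,k}|\ge j|s_{j,k}|$ and an explicit check of $\beta_{j+1}/\beta_j$ for $j=2,3,4,5$ to suffice, after which everything is monotone.
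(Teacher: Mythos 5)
Your reduction of \eqref{E:abs-Mnk} to the sign statement $(-1)^{n-k}M_{n,k}\ge0$, and the hand-check of $n\le3$ isolating the exceptional pair $(3,1)$, match the paper exactly (and your worry about $(3,2)$ is harmless: there $M_{3,2}=0$ and both sides of \eqref{E:abs-Mnk} vanish). Where you diverge is in how nonnegativity is established for $n\ge4$, and this is where your argument has a genuine gap. Bounding $|M_{n-1,k}|$ by the sum of absolute values of all terms in \eqref{E:ank-stir} forces you to prove that the last term $a_n:=\beta_n|s_{n,k}|/n!$ strictly exceeds the sum of \emph{all} preceding terms $a_j:=\beta_j|s_{j,k}|/j!$, for which your geometric comparison needs the consecutive-term ratio to exceed $2$, i.e.\ $j\beta_{j+1}\ge 2(j+1)\beta_j$. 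You do not prove this: you check a few small $j$ and assert that ``everything is monotone'' afterwards, but no uniform lower bound on $\beta_{j+1}/\beta_j$ of the required strength is derived (the natural manipulation of the recurrence $\beta_{n+1}=\sum_{j\le n-1}\binom{n}{j}\beta_j$ yields only $\beta_{n+1}/\beta_n\ge(n+1)/n$, far short of $2(n+1)/n$; note $\beta_5/\beta_4=11/4$ against the required $5/2$, so there is little slack). Moreover the comparison breaks at the bottom of the sum: for $k=1$ one has $a_2=\tfrac12$ and $a_3=\tfrac13$, so the ratio there is $\tfrac23<1$ (this is exactly the source of the $(3,1)$ exception), and ``the final term dominates the rest'' does not follow from your ratio bound without separately absorbing the $j=2,3$ terms.

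The paper avoids all of this with a weaker and cleaner statement: it shows that $\{a_j\}_j$ is nondecreasing for $j\ge\max\{k,4\}$ (ratio $\ge1$ suffices), using $|s_{j+1,k}|\ge j|s_{j,k}|$ from \eqref{E:abs-stir1-rr} together with $\beta_{j+1}/(j+1)\ge\beta_j/j$ for $j\ge3$, the latter proved in one line from the recurrence for $\beta$; pairing consecutive terms of the alternating sum then gives the sign. If you want to keep your induction on \eqref{E:bnk}, strengthen the hypothesis to ``the alternating partial sums are nonnegative'': then $|M_{n-1,k}|\le\beta_{n-1}|s_{n-1,k}|$, the inequality you need reduces to $a_n\ge a_{n-1}$, i.e.\ precisely the paper's monotonicity, and the factor-$2$ requirement disappears.
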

\begin{proof}  	
First, by the alternating-sign relation $|s_{j,k}| = (-1)^{j+k}s_{j,k}$,
\begin{align}\label{E:Mnk-w-abs}
    \frac{|M_{n,k}|}{n!}
	= \left|\sum_{k\le j \le n}\frac{\beta_j}{j!}
    \,s_{j,k} \right|
    = \pm\sum_{k\le j \le n}\frac{(-1)^{n-j}\beta_j}{j!}
    \,|s_{j,k}|.
\end{align}
For $0\le n\le 3$ and $1\le k\le n$, it is easily checked that the
equality holds with a plus sign in front of the second sum in
\eqref{E:Mnk-w-abs} except for the pair $(n,k)=(3,1)$ where the two
sides of \eqref{E:Mnk-w-abs} differ by a minus sign. We now show that
the sequence $\{\beta_n |s_{n,k}|/n!\}_{n}$ is nondecreasing for $n\ge \max\{k,4\}$ and fixed $k\ge1$, implying that
\eqref{E:Mnk-w-abs} holds with the plus sign and in turn
that \eqref{E:abs-Mnk} is valid. Since
\begin{align}\label{E:abs-stir1-rr}
	|s_{n,k}| = |s_{n-1,k-1}|+(n-1)|s_{n-1,k}|,
\end{align}
we have the trivial inequality $|s_{n+1,k}|\ge n|s_{n,k}|$; thus it
suffices to prove that
\begin{align}\label{E:beta-ineq}
    \frac{\beta_{n+1}}{n+1}\ge \frac{\beta_n}{n},\qquad(n\ge3).
\end{align}
For that purpose, we use the recurrence
\[
    \beta_{n+1} = \sum_{0\le j\le n-1}\binom{n}{j}\beta_j
	\qquad(n\ge1),
\]
with the initial conditions $\beta_0=1$ and $\beta_1=0$, which is
obtained by taking derivative of the exponential generating function (EGF)
$e^{e^z-1-z}$ of $\beta_n$ and by equating the coefficients. Then \eqref{E:beta-ineq} follows
from the inequality
\[
    \binom{n}{j}= \frac{n}{n-j}\binom{n-1}{j}
    \ge \frac{n+1}{n}\binom{n-1}{j}\qquad(1\le j<n).
    \qedhere
\]
\end{proof}

Define $P_n(v) := \sum_{1\le k\le n}|M_{n,k}|v^k$.
\begin{prop} For $n\ge4$,
\begin{align}\label{E:Pnv}
    \frac{P_n(v)}{n!}
	= \sum_{0\le j\le n-2}\binom{v+n-j-1}{n-j}
    (-1)^{j}\beta_{n-j}\qquad(n\ge4).
\end{align}	
\end{prop}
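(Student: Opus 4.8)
The plan is to reduce everything to the closed form for $|M_{n,k}|$ supplied by the preceding lemma, and then to recognize the resulting inner sum over $k$ as a rising-factorial generating function for the unsigned Stirling numbers of the first kind. Since the proposition assumes $n\ge4$, the single exceptional pair $(n,k)=(3,1)$ in the lemma never intervenes, so
\[
    |M_{n,k}| = n!\sum_{k\le j\le n}\frac{(-1)^{n-j}\beta_j}{j!}\,|s_{j,k}|
\]
holds for every $k$ with $1\le k\le n$. First I would substitute this into $P_n(v)=\sum_{1\le k\le n}|M_{n,k}|v^k$, divide by $n!$, interchange the two finite sums (the index set being $\{(k,j):1\le k\le j\le n\}$), and collect terms according to the value of $j$, which gives
\[
    \frac{P_n(v)}{n!}
    = \sum_{1\le j\le n}\frac{(-1)^{n-j}\beta_j}{j!}
      \sum_{1\le k\le j}|s_{j,k}|\,v^k .
\]

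Next I would evaluate the inner sum using the unsigned form of the defining relation for Stirling numbers of the first kind (the signed version $\sum_k s_{j,k}z^k=z(z-1)\cdots(z-j+1)$ already appeared in the proof of Theorem~\ref{T:Matsunaga}): namely
\[
    \sum_{1\le k\le j}|s_{j,k}|\,v^k = v(v+1)\cdots(v+j-1) = j!\binom{v+j-1}{j}.
\]
The factor $j!$ then cancels the $1/j!$ already present, giving
\[
    \frac{P_n(v)}{n!} = \sum_{1\le j\le n}(-1)^{n-j}\beta_j\binom{v+j-1}{j}.
\]

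Finally I would reindex by $j=n-\ell$, turning the right-hand side into $\sum_{0\le\ell\le n-1}(-1)^{\ell}\beta_{n-\ell}\binom{v+n-\ell-1}{n-\ell}$; the term $\ell=n-1$ equals $\beta_1\binom{v}{1}=0$ because $\beta_1=0$, so it may be dropped, leaving precisely the claimed range $0\le\ell\le n-2$. I do not expect a genuine obstacle here: the only two points that deserve an explicit word are that the lemma applies for all $k$ once $n\ge4$ (so the $(3,1)$ caveat is irrelevant), and that the vanishing of the $\beta_1$ term is exactly what accounts for the upper summation limit being $n-2$ rather than $n-1$ in the statement.
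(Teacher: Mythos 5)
Your proposal is correct and follows essentially the same route as the paper's proof, which likewise combines the lemma's closed form \eqref{E:abs-Mnk} with the generating polynomial $\sum_{1\le k\le n}|s_{n,k}|z^k = z(z+1)\cdots(z+n-1)$; you have merely written out the interchange of sums, the reindexing $j=n-\ell$, and the vanishing of the $\beta_1$ term that the paper leaves implicit. No gaps.
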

\begin{proof}
This follows from \eqref{E:abs-Mnk} and the generating polynomial
\[
    \sum_{1\le k\le n}|s_{n,k}|z^k
    = z(z+1)\cdots(z+n-1).\qedhere
\]
\end{proof}

\begin{cor}
For $n\ge4$
\[
    \frac{P_n(1)}{n!} = \sum_{0\le j\le n-2}
    (-1)^j\beta_{n-j} = \sum_{0\le j<n}
    (-1)^j(j+1)B_{n-1-j} + (-1)^nn.
\]
\end{cor}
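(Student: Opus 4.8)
The plan is to obtain the first equality directly from the preceding Proposition and then to reduce the second equality to the elementary relation $B_m=\beta_{m+1}+\beta_m$.

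Setting $v=1$ in \eqref{E:Pnv} makes every binomial coefficient $\binom{v+n-j-1}{n-j}=\binom{n-j}{n-j}=1$, so $P_n(1)/n!=\sum_{0\le j\le n-2}(-1)^j\beta_{n-j}$ at once, which is the left-hand identity. For the right-hand identity I would start from $\sum_{0\le j<n}(-1)^j(j+1)B_{n-1-j}$ and substitute $B_{n-1-j}=\beta_{n-j}+\beta_{n-1-j}$. This splits the sum into $\sum_{0\le j<n}(-1)^j(j+1)\beta_{n-j}$ and $\sum_{0\le j<n}(-1)^j(j+1)\beta_{n-1-j}$; a shift $j\mapsto j-1$ in the latter rewrites it as $-\sum_{1\le j\le n}(-1)^j j\beta_{n-j}$, and for the overlapping range $1\le j\le n-1$ the coefficients of $\beta_{n-j}$ combine as $(-1)^j(j+1)-(-1)^j j=(-1)^j$. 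What remains is the pair of endpoint terms: the $j=0$ term $\beta_n$ from the first sum and the $j=n$ term $-(-1)^n n\beta_0=-(-1)^n n$ from the second. Using $\beta_1=0$ to align the summation limits, the whole expression telescopes to $\sum_{0\le j\le n-2}(-1)^j\beta_{n-j}-(-1)^n n$, which is precisely the stated rearrangement. An equivalent route substitutes the closed form \eqref{E:beta-bell} for each $\beta_{n-j}$ and interchanges the order of summation, so that each $B_i$ picks up the multiplicity $n-i$; I would pick whichever version typesets more cleanly.

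The only delicate point is the bookkeeping of the boundary terms: one has to confirm that the index shift neither creates nor destroys terms other than the two endpoints, and that the values $\beta_0=1$ and $\beta_1=0$ let the summation ranges match exactly. Because $n\ge4$, the two boundary terms do not interfere, so no separate small-$n$ check is needed and the rest of the computation is routine.
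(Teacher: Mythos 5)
Your proposal is correct and matches the paper's (one-line) proof in substance: the first equality is exactly the substitution $v=1$ in \eqref{E:Pnv}, and your telescoping via $B_m=\beta_{m+1}+\beta_m$ is just the un-iterated form of \eqref{E:beta-bell}, which is what the paper invokes for the second equality (your ``equivalent route'' is literally the paper's). The boundary bookkeeping you flag --- the $j=0$ term $\beta_n$, the $j=n$ term $-(-1)^n n\beta_0$, and using $\beta_1=0$ to extend the range to $j\le n-1$ --- checks out.
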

\begin{proof}
Substitute $v=1$ in \eqref{E:Pnv} and then use \eqref{E:beta-bell}.
\end{proof}
By the ordinary generating function of the Bell numbers, we also have
the relation
\[
	\frac{P_n(1)}{n!}
	= [z^n]\frac{z}{(1+z)^2}\sum_{k\ge1}\prod_{1\le j\le k}
	\frac{z}{1-jz}\qquad(n\ge4).
\]
Note that $P_n(1) = \sum_{1\le j\le n}|M_{n,j}|\sim \beta_nn!$ while
$\sum_{1\le j\le n}M_{n,j}=0$.

\subsection{Asymptotics}

In this section, we turn to the asymptotics and show that $|M_{n,k}|$
is very close to $\beta_n |s_{n,k}|$ for large $n$, so the
distributional properties of $|M_{n,k}|$ will mostly follow from
those of $|s_{n,k}|$.

\begin{prop}\label{P:en} Uniformly for $1\le k\le n$
\begin{align}\label{E:bnk-ae}
    M_{n,k} = \beta_n s_{n,k}\lpa{1+O\lpa{n^{-1}\log n}}.
\end{align}
\end{prop}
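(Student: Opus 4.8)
The plan is to read \eqref{E:bnk-ae} off directly from the closed form \eqref{E:ank-stir}, namely $M_{n,k}=n!\sum_{k\le j\le n}(\beta_j/j!)\,s_{j,k}$, whose top term ($j=n$) is \emph{exactly} $\beta_n s_{n,k}$. Thus it suffices to bound the tail
\[
    R_{n,k}:=M_{n,k}-\beta_n s_{n,k}
    =n!\sum_{k\le j\le n-1}\frac{\beta_j}{j!}\,s_{j,k}
\]
and to show $|R_{n,k}|=O\lpa{n^{-1}(\log n)\,\beta_n|s_{n,k}|}$ uniformly in $k$; dividing by $\beta_n s_{n,k}$ then gives the result. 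Since $|R_{n,k}|\le n!\sum_{k\le j\le n-1}(\beta_j/j!)\,|s_{j,k}|$, the signs of $s_{j,k}$ are irrelevant and the absolute-value identity \eqref{E:abs-Mnk} is not needed.

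Two inputs do the work. First, iterating the unsigned Stirling recurrence \eqref{E:abs-stir1-rr} in the weaker form $|s_{j,k}|\ge(j-1)|s_{j-1,k}|$, downward from $n$, yields the \emph{uniform} comparison $|s_{j,k}|\le\bigl((j-1)!/(n-1)!\bigr)|s_{n,k}|$ for $1\le k\le j\le n$, so that
\[
    |R_{n,k}|\le n\,|s_{n,k}|\sum_{k\le j\le n-1}\frac{\beta_j}{j}.
\]
Second, by \eqref{E:beta-ineq} the sequence $j\mapsto\beta_j/j$ is nondecreasing, and $\beta_{j+1}/\beta_j\to\infty$ (from the classical asymptotics of the Bell numbers, using $B_n=\beta_{n+1}+\beta_n$ so that $\beta_n\sim B_{n-1}$ and $B_n/B_{n-1}\sim n/\log n$; cf.\ \eqref{E:Bn1}). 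Hence consecutive terms of $\sum_{k\le j\le n-1}\beta_j/j$ decay at least geometrically as $j$ decreases, so the sum is dominated by its last term, $\sum_{k\le j\le n-1}\beta_j/j=O(\beta_{n-1}/n)$.

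Combining the two bounds gives $|R_{n,k}|=O(\beta_{n-1}|s_{n,k}|)$, and since $\beta_{n-1}/\beta_n\sim B_{n-2}/B_{n-1}\sim(\log n)/n$, we obtain $|R_{n,k}|=O\lpa{n^{-1}(\log n)\,\beta_n|s_{n,k}|}$, uniformly in $k$ because the only $k$-dependence throughout is the common factor $|s_{n,k}|$. Finitely many small-$n$ cases, and the exceptional pair $(n,k)=(3,1)$ from the lemma, are absorbed into the implied constant.

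The substantive point is securing the correct $\log n$ in the error. A self-contained argument using only the $\beta$-recurrence — e.g.\ $\beta_{n+1}\ge n\beta_{n-1}$ — gives no better than $\beta_{n-1}/\beta_n=O(n^{-1/2})$, hence merely $M_{n,k}=\beta_n s_{n,k}\lpa{1+O(n^{-1/2})}$; reaching the stated exponent requires importing the sharp ratio $\beta_n/\beta_{n-1}\sim n/\log n$ from the Bell-number asymptotics (Moser--Wyman / de Bruijn type, or \eqref{E:Bn1} below). A secondary technicality is uniformity at the edges $k=1$ and $k=n$, where $s_{n,k}$ is small; but the bound is multiplicative in $|s_{n,k}|$, which is nonzero on all of $1\le k\le n$, so no difficulty arises there.
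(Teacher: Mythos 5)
Your proof is correct and follows essentially the same route as the paper's: isolate the dominant $j=n$ term of \eqref{E:ank-stir}, bound the tail using the iterated inequality $|s_{n,k}|\ge(n-1)\cdots(n-j)|s_{n-j,k}|$ from \eqref{E:abs-stir1-rr}, and conclude via the ratio $\beta_{n-1}/\beta_n=O(n^{-1}\log n)$. The only (cosmetic) difference is that you import this ratio from the Bell-number asymptotics \eqref{E:Bn1} via $\beta_n=\Theta(B_{n-1})$, whereas the paper reads it off its own saddle-point expansion \eqref{E:en} through \eqref{E:en-ratio}; your explicit geometric-decay argument for the tail sum is, if anything, slightly more careful than the paper's appeal to \eqref{E:en-ratio}.
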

Thus only the term with $j=n$ in the sum expression
\eqref{E:ank-stir} is dominant for large $n$; see
Figure~\ref{F:bnk-stir-ratio}.
\begin{figure}[!ht]
    \begin{center}
        \includegraphics[height=4cm]{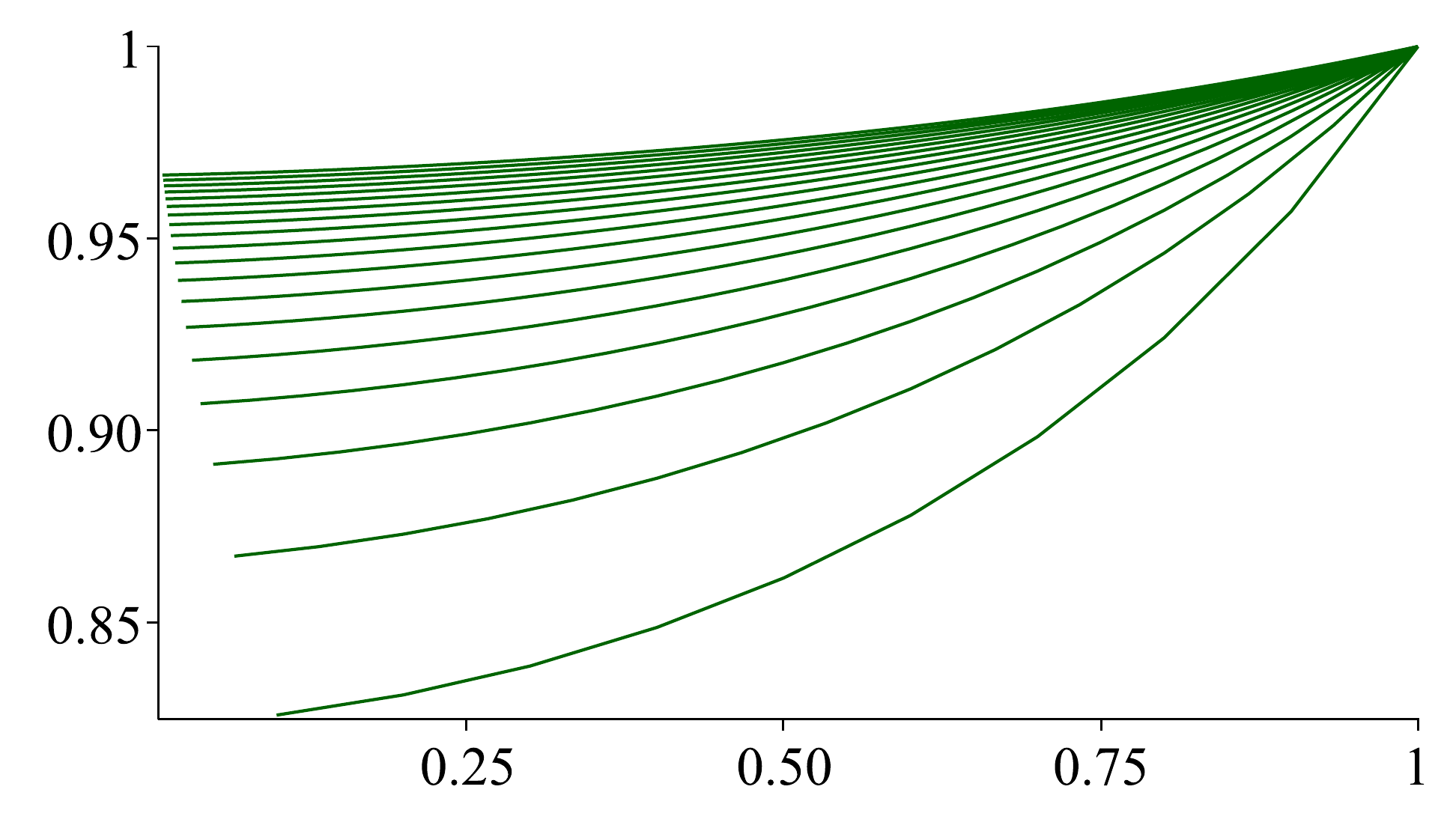}
    \end{center}
    \caption{Uniformity of the ratios of $M_{n,k}$ to
    $\beta_ns_{n,k}$ for $1\le k\le n$ and $n=10,15,\dots,100$ (in
	bottom-up order). The approximation is seen to be good even for moderate values of $n$.}
    \label{F:bnk-stir-ratio}
\end{figure}

\subsubsection{Asymptotics of $\beta_n$}

Recall that $\beta_n := n![z^n]e^{e^z-1-z}$ denotes the number of set
partitions without singletons. Let $W(x)>0$ denote the (principal
branch of) Lambert $W$-function, which satisfies the equation $We^W=x$
(and is positive for positive $x$). Asymptotically, for large $x$
\[
    W(x) = \log x - \log\log x + \frac{\log \log x}{\log x}
    + O\lpa{(\log x)^{-2}(\log\log x)^2};
\]
see Corless et al.'s survey paper ~\cite{Corless1996} for more
information on $W(x)$.

\begin{lmm} For large $n$ ($w := W(n)$)
\begin{align}\label{E:en}
    \beta_n = \frac{e^{(w+w^{-1}-1)n-w-1}}{\sqrt{w+1}}
    \Lpa{1-\frac{26w^4+67w^3+46w^2}{24n(w+1)^3}
	+O\lpa{n^{-2}(\log n)^2}}.
\end{align}
\end{lmm}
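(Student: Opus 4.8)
The approach is the saddle-point method applied to the Cauchy integral
\[
    \beta_n = \frac{n!}{2\pi i}\oint_{|z|=r}\frac{e^{e^z-1-z}}{z^{n+1}}\,\dd z
    = \frac{n!}{2\pi}\int_{-\pi}^{\pi} e^{\phi(re^{i\theta})}\,ire^{i\theta}\,\dd\theta,
    \qquad \phi(z):= e^z-1-z-(n+1)\log z,
\]
with $r=r_n>0$ chosen as the positive root of the saddle equation $\phi'(r)=0$, i.e.\ $r(e^r-1)=n+1$. The first task is to record the elementary consequences of this choice that make everything explicit: $e^r=1+(n+1)/r$, hence $\phi(r)=(n+1)/r-r-(n+1)\log r$ and $\phi^{(j)}(r)=1+(n+1)/r+(-1)^j(j-1)!\,(n+1)/r^{j}$ for $j\ge2$; in particular the ``variance'' $r^2\phi''(r)=r^2+(n+1)(r+1)\sim n(w+1)$ with $w:=W(n)$. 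I would then compare $r$ with $w$ by writing $r=w+\delta$ and using $we^w=n$ together with $(xe^x)'=(1+x)e^x$, which gives $\delta=e^{-w}\lpa{1+O(w/n)}=(w/n)\lpa{1+O(w/n)}$, and more generally a full asymptotic expansion of $\delta$ in descending powers of $n$ with coefficients polynomial in $w$, carried to whatever order the final estimate needs.

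The second task is the (routine) local analysis. On the circle $|z|=r$ the modulus of the integrand peaks at $\theta=0$; restricting to the central arc $|\theta|\le n^{-2/5}$ and bounding $\mathrm{Re}\,(e^z-1-z)$ on the complement (the standard admissibility estimate for $e^{e^z}$, as in Hayman's method) shows that the tail is smaller than the main term by a factor exponentially small in a positive power of $n$. Writing $D:=r\,\tfrac{d}{dr}$, one has $\phi(re^{i\theta})=\phi(r)+\sum_{k\ge2}\tfrac{(i\theta)^k}{k!}D^k\phi(r)$ with $D\phi(r)=0$, $D^2\phi(r)=r^2\phi''(r)$, and $D^3\phi(r),D^4\phi(r)$ again explicit rational functions of $r$ of order $n\cdot\mathrm{poly}(w)$; expanding the exponential and integrating term by term against the Gaussian yields
\[
    \beta_n = \frac{n!\,e^{\phi(r)+\log r}}{\sqrt{2\pi\,D^2\phi(r)}}
    \Lpa{1+\frac{c_1(r)}{n}+O\lpa{n^{-2}(\log n)^2}},
\]
where $c_1(r)$ is the usual combination $\tfrac18\,D^4\phi/(D^2\phi)^2-\tfrac{5}{24}\,(D^3\phi)^2/(D^2\phi)^3$ (plus a harmless lower-order piece), an explicit rational function of $r$ of size $O(1/n)$ once $e^r=1+(n+1)/r$ is used; note $e^{\phi(r)+\log r}=e^{(n+1)/r-r}/r^{n}$.

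The last task is bookkeeping. Insert Stirling's expansion $n!=\sqrt{2\pi n}\,n^n e^{-n}\lpa{1+\tfrac1{12n}+O(n^{-2})}$, write $n^n r^{-n}=\exp\lpa{n\log(n/r)}$, substitute $r=w+\delta$ everywhere — in $\phi(r)$, in $D^2\phi(r)$, in $c_1(r)$ and in $n\log(n/r)$ — and expand each piece in descending powers of $n$ with polynomial-in-$w$ coefficients, retaining the full $1/n$-coefficient (which is itself an $O(\log n)$ quantity) and discarding everything of order $n^{-2}(\log n)^2$. After the dust settles the exponent collapses to $(w+w^{-1}-1)n-w-1$ — this requires several delicate cancellations among the $\delta$-corrections, e.g.\ the constant $-1$ inside $e^r-1$ is annihilated by the $+1$ coming from the $e^{\delta}$ factor in $e^r=e^w e^{\delta}$, the surviving $-1$ instead arising from $-n\log r=-n\log w-1+\cdots$, and the would-be $w^{-1}$ contributions cancelling in pairs — while the prefactor $\sqrt{n/(r^2+(n+1)(r+1))}$ collapses to $(w+1)^{-1/2}$, and the three independent sources of $1/n$ corrections (the local saddle expansion $c_1$, the $\tfrac1{12n}$ from Stirling, and the $r\mapsto w$ substitution) combine into $-\dfrac{26w^4+67w^3+46w^2}{24n(w+1)^3}$. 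I expect this final consolidation to be the main obstacle: the saddle-path bound and the leading term are standard, but each of the three $1/n$-contributions is an asymptotic series in $w$ reaching degree four over $(w+1)^3$, so obtaining the stated closed form forces one to carry every term one order further than seems necessary and then to verify a fairly lengthy algebraic simplification. (An alternative is to feed the Moser--Wyman asymptotics of $B_n$ into the relation $B_{n-1}=\beta_n+\beta_{n-1}$ and iterate, but extracting the stated precision from that alternating relation is no less delicate.)
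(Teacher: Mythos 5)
Your proposal follows exactly the paper's route: the paper's own proof is just the statement that the result ``follows from applying the standard saddle-point method'' to $\beta_n=\frac{n!}{2\pi i}\oint_{|z|=r_0}z^{-n-1}e^{e^z-1-z}\dd z$ with $r_0(e^{r_0}-1)=n$ expanded around $w=W(n)$ via Lagrange inversion, with the details deferred to de Bruijn, Moser--Wyman and Flajolet--Sedgewick, and your sketch supplies the same saddle-point equation (up to the harmless $n$ versus $n+1$ normalization), the same local Gaussian expansion with the standard $\tfrac18 D^4\phi/(D^2\phi)^2-\tfrac{5}{24}(D^3\phi)^2/(D^2\phi)^3$ correction, and the same final substitution $r=w+O(w/n)$. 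Your plan is correct and in fact more explicit than the paper's proof; the only nitpick is that $c_1(r)$ is $O(w/n)=O(n^{-1}\log n)$ rather than $O(1/n)$, which you do acknowledge later when noting the $1/n$-coefficient is an $O(\log n)$ quantity.
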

\begin{proof}
This follows from applying the standard saddle-point method to
Cauchy's integral formula:
\[
    \beta_n = \frac{n!}{2\pi i}\oint_{|z|=r_0}
    z^{-n-1}e^{e^z-1-z}\dd z,
\]
where $r_0=r_0(n)>0$ solves the saddle-point equation
\[
    r_0(e^{r_0}-1)=n,
\]
which satisfies asymptotically (by Lagrange inversion formula
\cite[A.6]{Flajolet2009})
\[
    r_0= w+ \sum_{j\ge1}\eta_j n^{-j},
    \quad \text{with}\quad
    \eta_j = \frac1j[t^{j-1}]\Lpa{\frac{wt(w+t)}
    {(w+t)e^t-w}}^j.
\]
In particular,
\[
    r_0 = w + \frac{w^2}{n(w+1)}
    -\frac{w^3(w^2-2)}{2n^2(w+1)^3}
    +O\lpa{n^{-3}w^3}.
\]
See \cite{deBruijn1981}, \cite{Moser1955}, or \cite{Flajolet2009} for
similar details concerning Bell numbers or the saddle-point method.
\end{proof}
\begin{cor} For large $n$ and $\ell=O(1)$
\begin{align}\label{E:en-ratio}
    \frac{\beta_{n-\ell}}{\beta_n}
    = \Lpa{\frac{W(n)}n}^\ell\lpa{1+O\lpa{n^{-1}\ell^2\log n}}.
\end{align}
\end{cor}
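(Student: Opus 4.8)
The plan is to take logarithms in the asymptotic expansion \eqref{E:en} at $n$ and at $n-\ell$ and subtract, keeping only what survives at order $O(n^{-1}\ell^2\log n)$. Write $w:=W(n)$, $w_\ell:=W(n-\ell)$, and put $f(x):=x\bigl(W(x)+W(x)^{-1}-1\bigr)$, the exponent of the dominant factor in \eqref{E:en}. Since $w\sim\log n$, the bracketed factor in \eqref{E:en} equals $1+O(n^{-1}\log n)$, so
\[
    \log\beta_n = f(n)-w-1-\tfrac12\log(w+1)+O\bigl(n^{-1}\log n\bigr).
\]
Because $\ell=O(1)$, the same identity holds at $n-\ell$ with an error still of order $O(n^{-1}\log n)$, and subtracting gives
\[
    \log\frac{\beta_{n-\ell}}{\beta_n}
    = \bigl(f(n-\ell)-f(n)\bigr)-(w_\ell-w)-\tfrac12\log\frac{w_\ell+1}{w+1}+O\bigl(n^{-1}\log n\bigr).
\]

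The key step is the clean identity $f'(x)=W(x)$. From $W(x)e^{W(x)}=x$ one gets $W'(x)=\frac{W(x)}{x(1+W(x))}$, and differentiating $f$ term by term,
\[
    (xW)'=W+\frac{W}{1+W},\qquad (x/W)'=\frac1W-\frac1{W(1+W)}=\frac1{1+W},\qquad (-x)'=-1,
\]
whose sum is $W$. By the mean value theorem $f(n-\ell)-f(n)=-\ell\,W(\xi)$ for some $\xi\in(n-\ell,n)$; since $0<W'(t)=\frac{W(t)}{t(1+W(t))}\le 1/t=O(n^{-1})$ there, $W(\xi)=w+O(\ell/n)$, hence $f(n-\ell)-f(n)=-\ell w+O(\ell^2/n)$. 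Finally, taking logarithms in $we^w=n$ gives $w+\log w=\log n$, i.e. $\log(W(n)/n)=-w$, so this dominant contribution is exactly $\ell\log(W(n)/n)+O(\ell^2/n)$.

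For the remaining two terms, the same mean-value bound gives $w_\ell-w=O(\ell/n)$, whence also $\log\frac{w_\ell+1}{w+1}=\log\bigl(1+\frac{w_\ell-w}{w+1}\bigr)=O\bigl(\ell/(n\log n)\bigr)$. The statement is trivial for $\ell=0$, so one may assume $\ell\ge1$; then, with $\ell$ bounded, each of $\ell^2/n$, $\ell/n$, $\ell/(n\log n)$ and $n^{-1}\log n$ is $O(n^{-1}\ell^2\log n)$. Collecting terms,
\[
    \log\frac{\beta_{n-\ell}}{\beta_n}=\ell\log\frac{W(n)}{n}+O\bigl(n^{-1}\ell^2\log n\bigr),
\]
and exponentiating (the error inside $\exp$ tends to $0$) yields \eqref{E:en-ratio}.

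The only point requiring care is this last bookkeeping: the binding error is the correction-factor term, which is exactly of order $n^{-1}\log n$, while all genuinely $\ell$-dependent discrepancies are a further $\log n$-factor smaller; a sharper estimate would force one to retain the explicit term $\frac{26w^4+67w^3+46w^2}{24n(w+1)^3}$ from \eqref{E:en}. The identity $f'(x)=W(x)$ and the relation $\log(W(n)/n)=-W(n)$ are what make the exponent difference transparent; everything else is routine.
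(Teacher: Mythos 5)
Your argument is correct and follows essentially the same route as the paper, which simply cites the saddle-point approximation \eqref{E:en} together with the Taylor expansion $W(n-t)=W(n)-\frac{W(n)t}{n(W(n)+1)}+O(n^{-2}t^2)$; you have merely written out the bookkeeping the paper leaves implicit. The identity $f'(x)=W(x)$ for $f(x)=x(W(x)+W(x)^{-1}-1)$, combined with $\log(W(n)/n)=-W(n)$, is a tidy way to organize that expansion, and your accounting of which error term binds (the $O(n^{-1}\log n)$ from the correction factor in \eqref{E:en}, absorbed since $\ell\ge1$) is accurate.
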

\begin{proof}
By the saddle-point approximation \eqref{E:en} and the expansion
\begin{align}\label{E:Wn1}
    W(n-t) = W(n) - \frac{W(n)t}{n(W(n)+1)}
    +O\lpa{n^{-2}t^2},
\end{align}
for $t=O(1)$, where the relation $W'(x) = \frac{W(x)}{x(W(x)+1)}$ is
used; see \cite[Eqs.~(3.2) and (3.3)]{Corless1996}.
\end{proof}

\subsubsection{Proof of Proposition~\ref{P:en}}
We are now ready to prove Proposition~\ref{P:en}.
\begin{proof}
From \eqref{E:ank-stir}, we have
\[
    \frac{M_{n,k}}{\beta_n}
    = s_{n,k}\llpa{1+O\llpa{
    \sum_{1\le j\le n-k}\frac{n!}{(n-j)!}
    \cdot \frac{\beta_{n-j}}{\beta_n}
    \cdot \frac{|s_{n-j,k}|}{|s_{n,k}|}}}.
\]
Now by the recurrence \eqref{E:abs-stir1-rr}, we have the trivial
inequalities
\[
    |s_{n,k}| \ge (n-1)|s_{n-1,k}|\ge (n-1)(n-2)|s_{n-2,k}|
    \ge\cdots\ge (n-1)\cdots(n-j)|s_{n-j,k}|,
\]
which then give
\begin{align*}
    \sum_{1\le j\le n-k}\frac{n!}{(n-j)!}
    \cdot \frac{\beta_{n-j}}{\beta_n}
    \cdot \frac{|s_{n-j,k}|}{|s_{n,k}|}
    &\le \sum_{1\le j\le n-k}
    \frac{\beta_{n-j}}{\beta_n}
    \cdot \frac{n}{n-j}\\
    &= O\Lpa{\frac{\log n}{n}},
\end{align*}
by the relation \eqref{E:en-ratio}. This completes the proof of
\eqref{E:bnk-ae}.
\end{proof}
Extending the same proof shows that \eqref{E:abs-Mnk} is itself an
asymptotic expansion for large $n$ and each $k\in[1,n-1]$, namely,
\[
    \frac{|M_{n,k}|}{\beta_n |s_{n,k}|}
    = 1+\sum_{1\le j< J}(-1)^j
    \frac{n!}{(n-j)!}\cdot \frac{\beta_{n-j}}{\beta_n}
    \cdot\frac{|s_{n-j,k}|}{|s_{n,k}|}
    +O\lpa{n^{-J}(\log n)^J},
\]
for any bounded $J\ge 1$ satisfying $J\le n-k$.

\subsection{Asymptotic distributions}

With the closed-form \eqref{E:Pnv} and the uniform approximation
\eqref{E:bnk-ae} available, all distributional properties of
$|M_{n,k}|$ can be translated into those of $|s_{n,k}|$. For
simplicity, we introduce the following notation and say that
$\{a_{n,k}\}_k$ \emph{satisfies the local limit theorem (LLT):}
\begin{align*}
    \{a_{n,k}\}_k \simeqq \LLT\lpa{\mu_n, \sigma^2_n; \ve_n},
\end{align*}
\emph{for large $n$ and positive sequence $a_{n,k}$ if the underlying
sequence of random variables}
\[
    \mathbb{P}(X_n=k)
    := \frac{a_{n,k}}{\sum_{j}a_{n,j}},
\]
\emph{satisfies $\mathbb{E}(X_n)\sim \mu_n$, $\mathbb{V}(X_n)\sim
\sigma^2_n$, and}
\[
    \sup_{x\in\mathbb{R}}
    \sigma_n\Bigl|\mathbb{P}(X_n=\mu_n + x\sigma_n)
    -\frac{e^{-\frac12x^2}}{\sqrt{2\pi}}\Bigr| = O(\ve_n),
\]
\emph{with $\sigma_n\to\infty$ and $\ve_n\to0$.} When the convergence
rate is immaterial, we also write $\LLT(\mu_n,\sigma_n^2) =
\LLT(\mu_n,\sigma_n^2;o(1))$. Similarly, the notations
$\mathscr{N}(\mu_n,\sigma_n^2)$ and $\mathscr{N}(\mu_n,\sigma_n^2;
\ve_n)$ denote the \emph{central limit theorem} (CLT or \emph{weak
convergence to standard normal law}) with convergence rates $o(1)$
and $\ve_n$, respectively.

\begin{thm} \label{T:Matsunaga-numbers}
\begin{align}\label{E:Mnk-llt}
    \{|M_{n,k}|\}_k
    \simeqq \LLT\lpa{\log n, \log n; (\log n)^{-\frac12}}.
\end{align}
\end{thm}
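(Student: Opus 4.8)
The plan is to reduce \eqref{E:Mnk-llt} to the classical local limit theorem for the unsigned Stirling numbers of the first kind $|s_{n,k}|$, using the uniform approximation \eqref{E:bnk-ae} of Proposition~\ref{P:en}. Since \eqref{E:bnk-ae} gives $|M_{n,k}| = \beta_n|s_{n,k}|\lpa{1+O\lpa{n^{-1}\log n}}$ uniformly in $1\le k\le n$, summing over $k$ yields $\sum_k|M_{n,k}| = \beta_n\,n!\,\lpa{1+O\lpa{n^{-1}\log n}}$ (consistent with the relation $P_n(1)=\sum_k|M_{n,k}|\sim\beta_n n!$ noted above), whence
\[
    \mathbb{P}(X_n=k)
    = \frac{|M_{n,k}|}{\sum_j|M_{n,j}|}
    = \frac{|s_{n,k}|}{n!}\,\lpa{1+O\lpa{n^{-1}\log n}}
\]
uniformly in $1\le k\le n$. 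Thus $X_n$ and the random variable $K_n$ with $\mathbb{P}(K_n=k)=|s_{n,k}|/n!$ have the same limiting distributional behaviour, the discrepancy being a multiplicative factor $1+O\lpa{n^{-1}\log n}$ that is negligible against the target rate $(\log n)^{-1/2}$.

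Next I would recall the standard probabilistic description of $K_n$. From $\sum_k|s_{n,k}|v^k = v(v+1)\cdots(v+n-1)$, the variable $K_n$ (the number of cycles of a uniform random permutation of $[n]$) satisfies $\mathbb{E}(v^{K_n}) = \Gamma(n+v)/(\Gamma(n+1)\Gamma(v))$, i.e.\ $K_n$ is distributed as a sum $\sum_{1\le j\le n}\xi_j$ of independent Bernoulli variables with $\mathbb{P}(\xi_j=1)=1/j$. Hence $\mathbb{E}(K_n)=H_n=\log n+\gamma+O(n^{-1})\sim\log n$ and $\mathbb{V}(K_n)=H_n-H_n^{(2)}=\log n+\gamma-\tfrac{\pi^2}{6}+o(1)\sim\log n$, which identifies the centering and scaling in \eqref{E:Mnk-llt}. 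The local limit theorem for $K_n$, with the Gaussian approximation holding uniformly over all $k$ and error $O(1/\sigma_n)=O\lpa{(\log n)^{-1/2}}$, is classical: it follows from the local central limit theorem for sums of independent integer-valued summands (the lattice span being $1$), the error being governed by the Lyapunov ratio $\sum_j\mathbb{E}|\xi_j-1/j|^3/\sigma_n^3\asymp\log n/(\log n)^{3/2}=(\log n)^{-1/2}$; see \cite{Hwang1995} for sharp uniform estimates. Alternatively one reads the LLT directly off a saddle-point analysis of $\frac1{2\pi}\int_{-\pi}^{\pi}e^{-itk}\,\frac{\Gamma(n+e^{it})}{\Gamma(n+1)\Gamma(e^{it})}\dd t$.

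Finally I would transfer back to $X_n$ and renormalise. The passage $K_n\mapsto X_n$ multiplies the mass function by $1+O\lpa{n^{-1}\log n}$ uniformly in $k$; after multiplying by $\sigma_n=O\lpa{(\log n)^{1/2}}$ and using $\mathbb{P}(K_n=k)=O(\sigma_n^{-1})$ this contributes only $O\lpa{n^{-1}\log n}$ to the LLT error, and the same uniform factor gives $\mathbb{E}(X_n)\sim\log n$, $\mathbb{V}(X_n)\sim\log n$ from the moments of $K_n$ above. Replacing the precise centering and scaling $\bigl(H_n,\sqrt{H_n-H_n^{(2)}}\bigr)$ by $(\log n,\sqrt{\log n})$ shifts the argument $x$ of the limiting density by $O\lpa{(\log n)^{-1/2}}$ and rescales it by $1+O\lpa{(\log n)^{-1}}$; since $x\mapsto e^{-x^2/2}/\sqrt{2\pi}$ is bounded and Lipschitz, the error so incurred is again $O\lpa{(\log n)^{-1/2}}$, matching the claimed rate. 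The only genuinely delicate point I expect is this last bookkeeping: tracking the three error sources — the $M$-versus-$s$ discrepancy, the sub-leading terms in $\mu_n$ and $\sigma_n^2$, and the intrinsic LLT error — and verifying that each is $O\lpa{(\log n)^{-1/2}}$ uniformly for $x\in\mathbb{R}$, in particular in the Gaussian tails where both the probability and the density are small; everything else is either classical or already in place from Proposition~\ref{P:en}.
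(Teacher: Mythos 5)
Your proposal is correct and follows essentially the same route as the paper: both reduce the problem to the uniform pointwise comparison $\mathbb{P}(X_n=k)=\frac{|s_{n,k}|}{n!}\lpa{1+O\lpa{n^{-1}\log n}}$ (you via Proposition~\ref{P:en}, the paper via \eqref{E:Pnv} and \eqref{E:abs-Mnk}) and then transfer the classical local limit theorem for the Stirling cycle distribution, citing \cite{Hwang1995}. Your extra detail on the Bernoulli decomposition of the cycle count and the recentering from $\bigl(H_n, H_n-H_n^{[2]}\bigr)$ to $(\log n,\log n)$ is sound bookkeeping that the paper leaves implicit.
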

See \cite{Hwang1994,Hwang1995,Pavlov1989} for other properties of the
Stirling cycle distribution.

\begin{figure}[!ht]
\begin{center}
\begin{tabular}{cccc}
    \includegraphics[height=3cm]{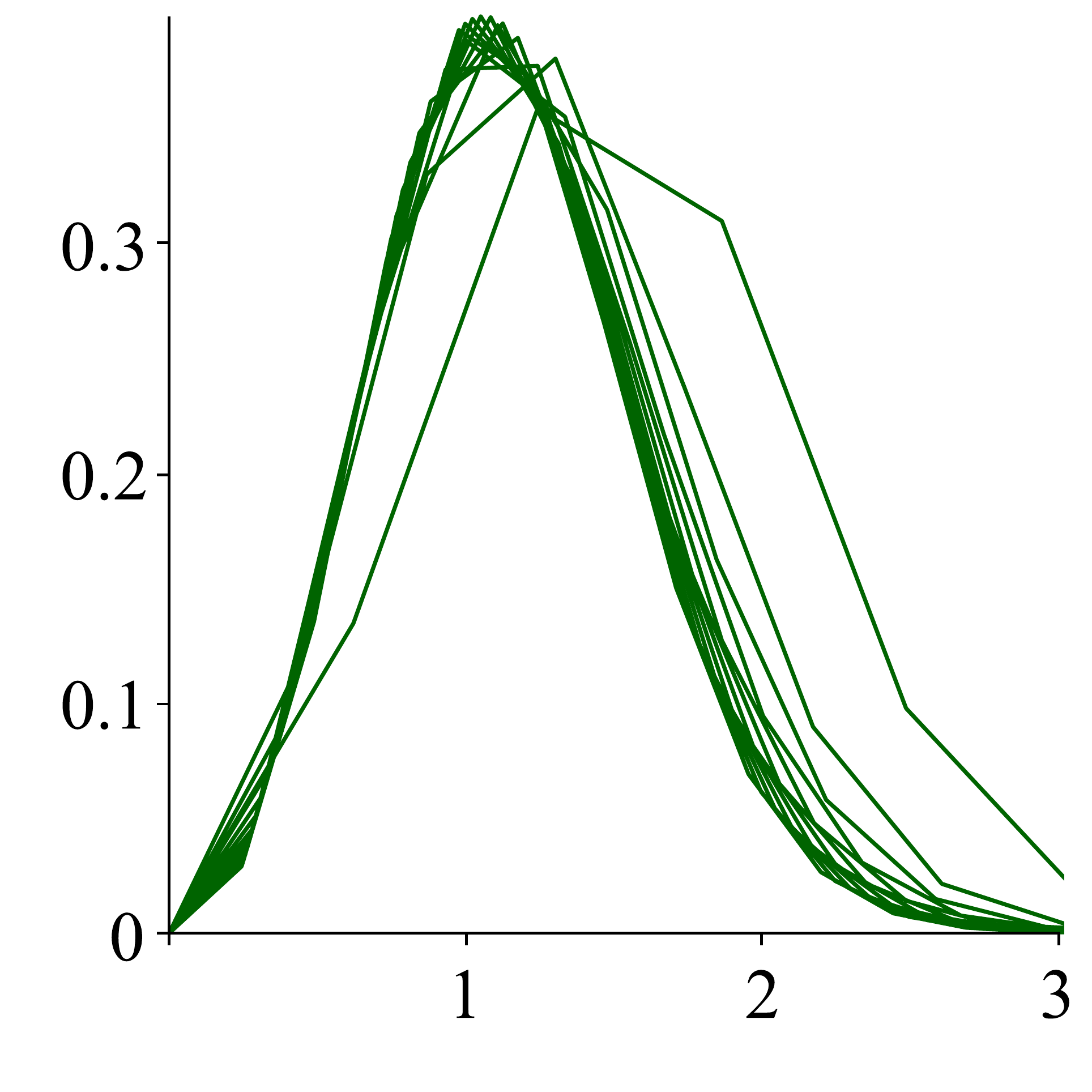} &
    \includegraphics[height=3cm]{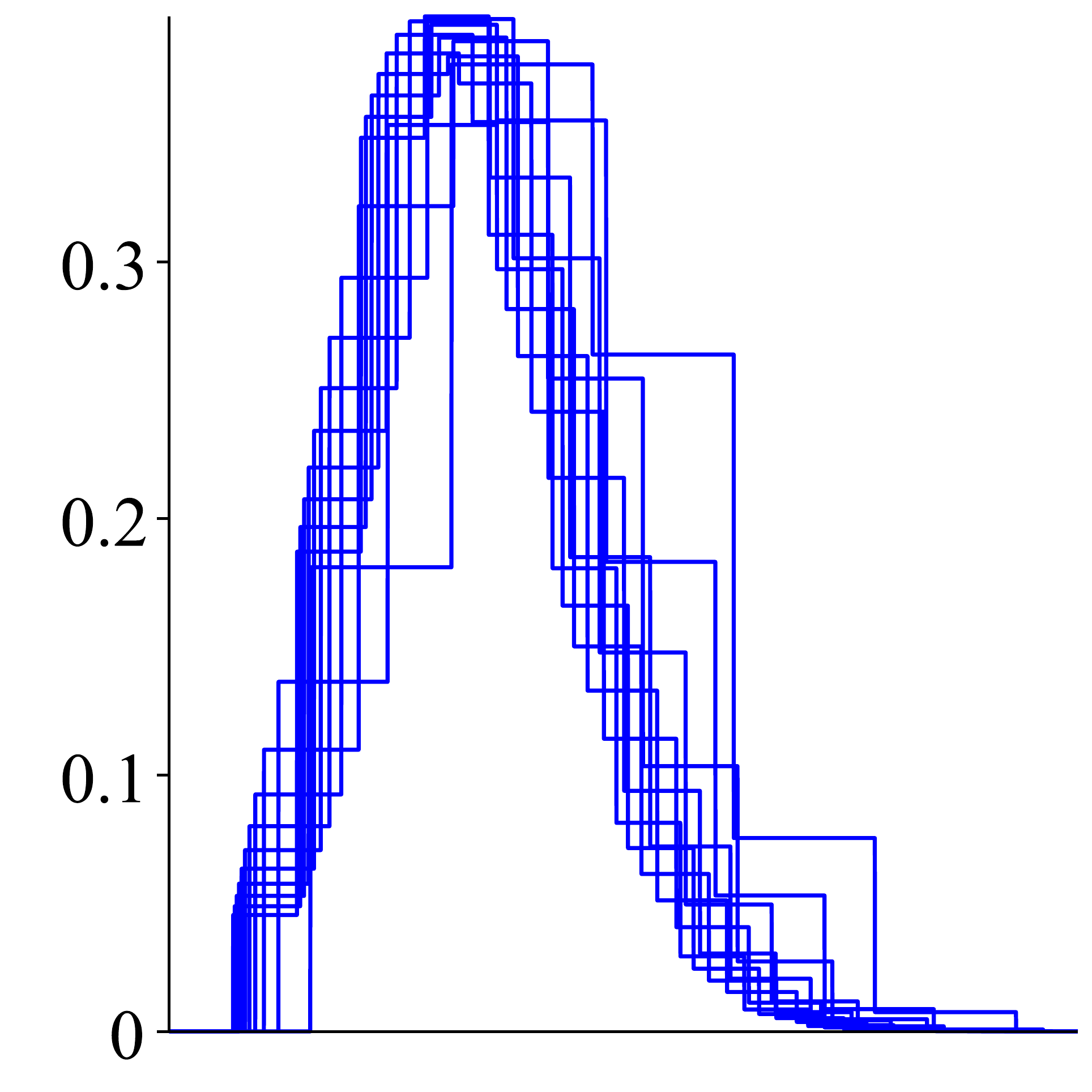} &
    \includegraphics[height=3cm]{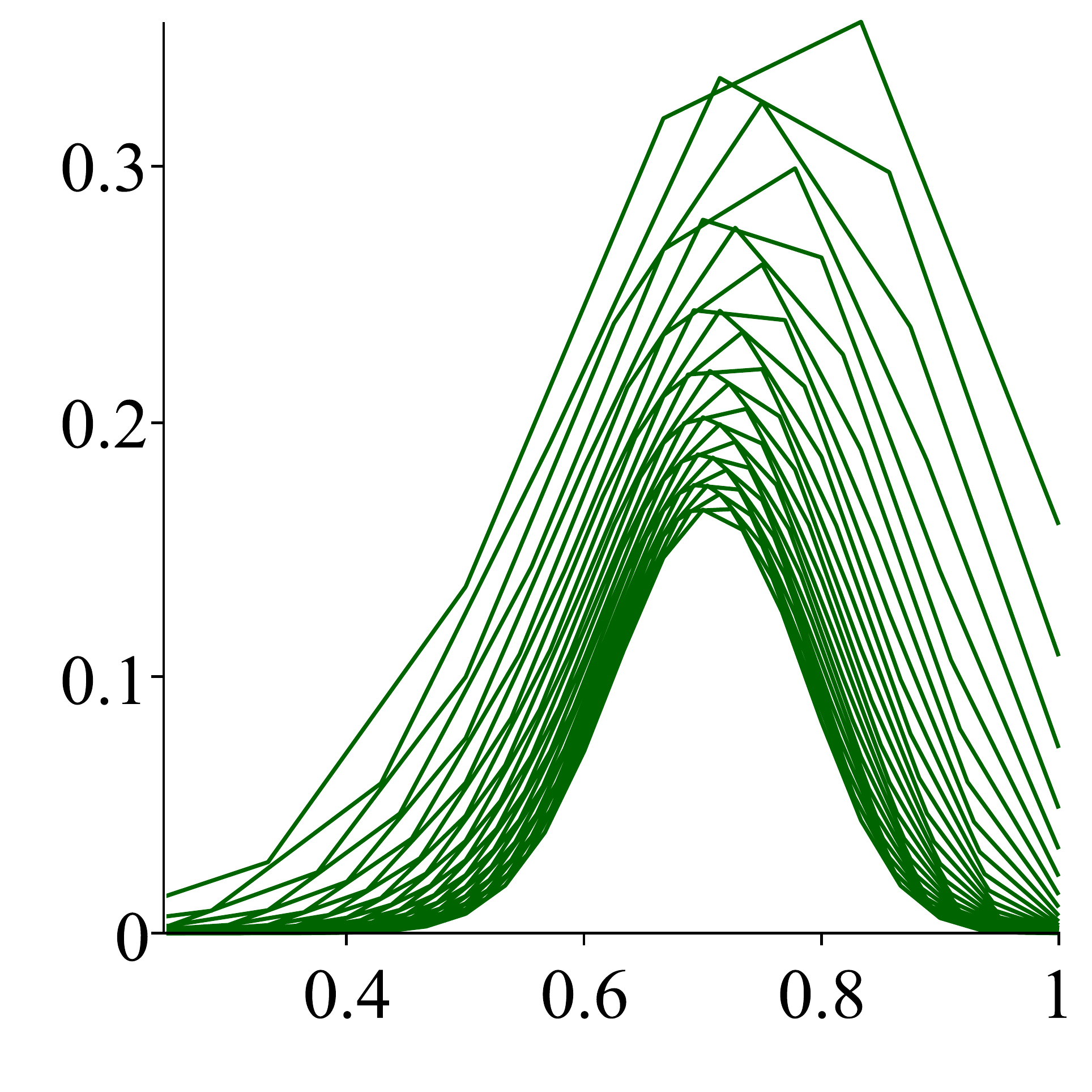} &
    \includegraphics[height=3cm]{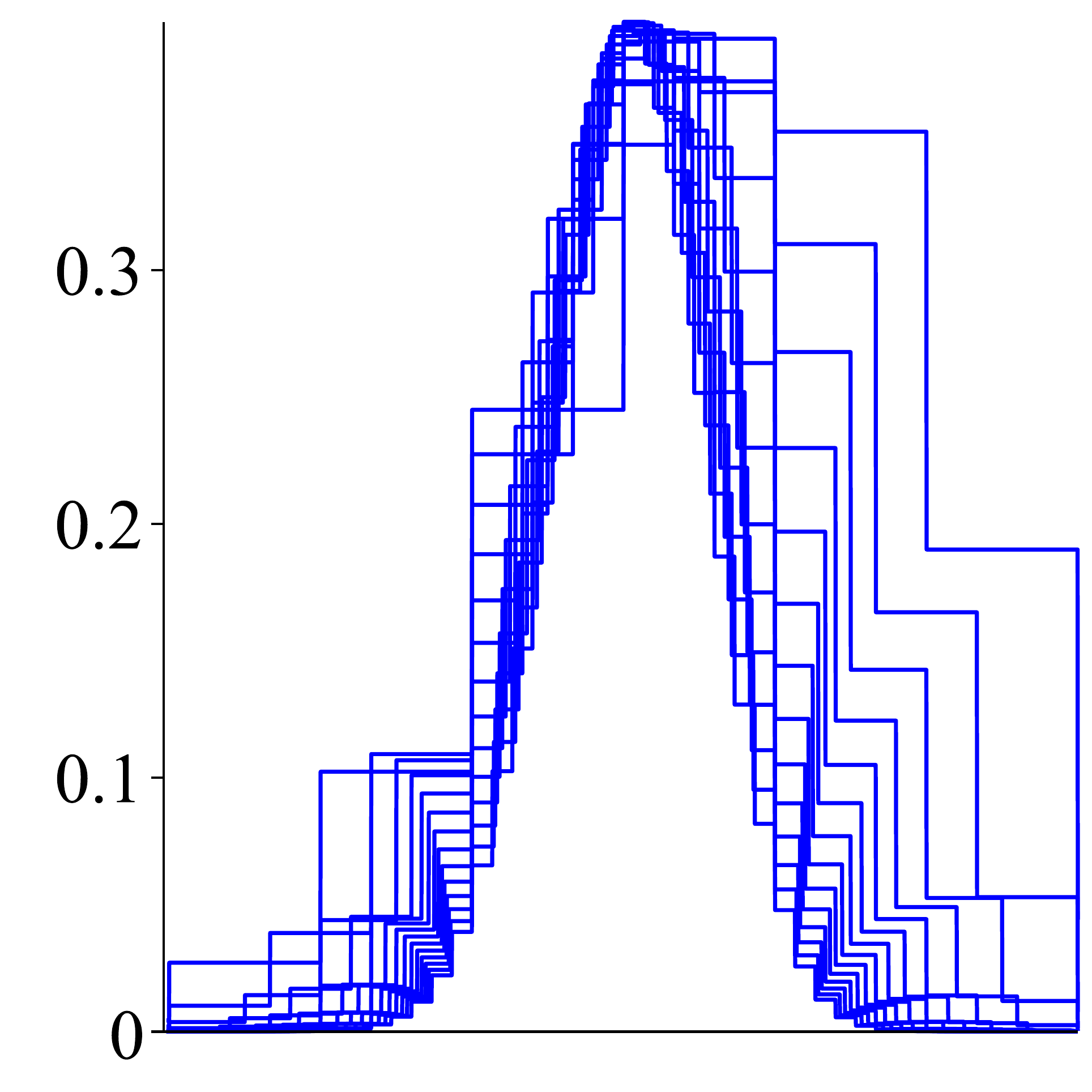} \\
    $\frac{|M_{n,x\log n}|}{\sum_{1\le j\le n}|M_{n,j}|}$ &
    $\frac{|M_{n,\tr{x\log n}}|}{\sum_{1\le j\le n}|M_{n,j}|}$ &
    $\frac{|M_{n,xn}|n^{xn}}{\sum_{1\le j\le n}|M_{n,j}|n^j}$ &
    $\frac{|M_{n,\tr{xn}}|n^{\tr{xn}}}
    {\sum_{1\le j\le n}|M_{n,j}|n^j}$ \\
    \multicolumn{2}{c}{\small $n=5\ell, 1\le \ell\le 12$} &
    \small $6\le n\le 30$ & \small $n=3\ell, 2\le \ell\le 15$
\end{tabular}
\end{center}
\caption{Different graphical renderings of histograms of
$|M_{n,k}|$ (the left two) and $|M_{n,k}|n^k$ (rightmost and
middle-right). All plots, except for the third one, are scaled by the standard deviations.}\label{F:histo}
\end{figure}

\begin{proof}
For the proof of Theorem~\ref{T:Matsunaga-numbers}, we begin with the
calculations of the mean and the variance. For convenience, define
\[
	\mathbb{E}\lpa{v^{X_n}} := \frac{P_n(v)}{P_n(1)}.
\]
Then, by \eqref{E:Pnv}, we obtain, for $n\ge4$
\[
    \mathbb{E}(X_n)
	= \frac{\displaystyle\sum_{0\le j\le n-2}
    (-1)^{j}\beta_{n-j} H_{n-j}}
	{\displaystyle\sum_{0\le j\le n-2}
    (-1)^{j}\beta_{n-j}}
	= \frac{\beta_nH_n-\beta_{n-1}H_{n-1}+-\cdots}
	{\beta_n-\beta_{n-1}+-\cdots},
\]
where $H_n^{[m]} := \sum_{1\le j\le n}j^{-m}$ denotes the harmonic
numbers and $H_n := H_n^{[1]}$. Similarly, the variance is given by
\[
	\mathbb{V}(X_n)
	= \frac{\displaystyle\sum_{0\le j\le n-2}
    (-1)^{j}\beta_{n-j} (H_{n-j}^2-H_{n-j}^{[2]})}
	{\displaystyle\sum_{0\le j\le n-2}(-1)^{j}\beta_{n-j}}
	-\mathbb{E}(X_n)^2+\mathbb{E}(X_n),
\]
for $n\ge4$. Note that each of these sums is itself an asymptotic
expansion in view of \eqref{E:en-ratio}; more precisely, if a given
sequence $\{\alpha_n\}$ satisfies $\alpha_{n-1}/\alpha_n\to c$, where
$c>0$, then, by \eqref{E:en-ratio}, we can group the terms in the
following way
\begin{align*}
    \frac{\displaystyle\sum_{0\le j\le n-2}
    (-1)^{j}\alpha_{n-j}\beta_{n-j}}
	{\displaystyle\sum_{0\le j\le n-2}
    (-1)^{j}\beta_{n-j}}
    &= \alpha_n \biggl(1-\frac{\beta_{n-1}}{\beta_n}
    \Lpa{1-\frac{\alpha_{n-1}}{\alpha_n}}\\
    &\qquad+\Lpa{\frac{\beta_{n-1}}{\beta_n}}^2
    \Lpa{1-\frac{\alpha_{n-1}}{\alpha_n}}
    -\frac{\beta_{n-2}}{\beta_n}
    \Lpa{1-\frac{\alpha_{n-2}}{\alpha_n}}+\cdots\biggr),
\end{align*}
where the terms decrease in powers of $n^{-j}W(n)^j$. Applying this
to the mean with $\alpha_n=H_n$, we then deduce, by \eqref{E:en}, that
\begin{align*}
	\mathbb{E}(X_n) &= H_n +\frac{W(n)}{n^2}
	+O\lpa{n^{-3}(\log n)^2},\\
	\mathbb{V}(X_n) &= H_n-H_n^{[2]}+\frac{W(n)}{n^2}
	+O\lpa{n^{-3}(\log n)^2},
\end{align*}
which is to be compared with the exact mean $H_n$ and exact variance
$H_n-H_n^{[2]}$ of the Stirling cycle distribution $|s_{n,k}|/n!$.
From known asymptotic expansions for the harmonic numbers, we also
have
\begin{align*}
	\mathbb{E}(X_n) &= \log n +\gamma + \frac1{2n}
	+\frac{12W(n)-1}{12n^2}+O\lpa{n^{-3}(\log n)^2},\\
	\mathbb{V}(X_n) &= \log n + \gamma - \frac{\pi^2}6
	+\frac3{2n} +\frac{12W(n)-7}{12n^2}
	+O\lpa{n^{-3}(\log n)^2},
\end{align*}
where $\gamma$ denotes the Euler–Mascheroni constant.

Similarly,
\[
	\mathbb{E}\lpa{v^{X_n}}
	= \binom{v+n-1}{n}
	\Lpa{1+\frac{W(n)(v-1)}{n^2}+O\lpa{n^{-3}(\log n)^2}},
\]
uniformly for $v=O(1)$. Then by singularity analysis
\cite{Flajolet2009}, we have
\[
    \mathbb{E}\lpa{v^{X_n}}
    = \frac{vn^{v-1}}{\Gamma(v+1)}\Lpa{1+O\lpa{n^{-1}(1+|v|^2)}},
\]
uniformly for $v=O(1)$, where the factor $1/\Gamma(v+1)$ is
interpreted as zero at negative integers; see \cite{Hwang1995}. Since
this has the standard form of the Quasi-powers framework (see
\cite{Flajolet2009,Hwang1998}), we then obtain the central limit
theorem $\mathscr{N}(\log n,\log n;(\log n)^{-\frac12})$.

On the other hand, by \eqref{E:Pnv} and \eqref{E:abs-Mnk}, we get
\[
	\mathbb{P}(X_n=k)
	= \frac{|M_{n,k}|}{P_n(1)}
	= \frac{|s_{n,k}|}{n!}\lpa{1+O\lpa{n^{-1}\log n}},
\]
uniformly for $1\le k\le n$. Thus the local limit theorem
\eqref{E:Mnk-llt} follows readily from the known results for
$|s_{n,k}|$ in \cite{Pavlov1989} or \cite{Hwang1994, Hwang1995}; see also the asymptotic approximation \eqref{E:mw1}.
\end{proof}
\section{Distribution of weighted Matsunaga numbers}

Since $B_n$ is, up to a minor shift and a normalizing factor, the sum
of $M_{n,k}n^k$ over all $k$, following the same spirit of
Proposition~\ref{P:en} and \eqref{E:Mnk-llt}, we examine more closely
how these weighted numbers $|M_{n,k}n^k|$ distribute over varying
$k$, which turns out to be very different from the unsigned Matsunaga
numbers; see Figure~\ref{F:histo} for a graphical illustration.

In the OEIS database, the sequence
\href{https://oeis.org/A056856}{A056856} equals indeed the
distribution of $|s_{n,k}|n^{k-1}$ for $1\le k\le n$, which is
mentioned to be related to rooted trees and unrooted planar trees,
together with several other formulae. See Section~\ref{S:three} for
two other sequences with the same asymptotic behaviors.

Our analysis shows that while $|M_{n,k}|$ are asymptotically normal
with logarithmic mean and logarithmic variance, their weighted
versions $|M_{n,k}|n^k$ also follow a normal limit law but with
linear mean and linear variance.
\begin{thm}\label{T:weighted-M}
\begin{align}\label{E:Yn}
    \{|M_{n,k}|n^k\}_k
    \simeq\LLT\lpa{\mu n, \sigma^2 n; n^{-\frac12}}
    \quad\text{with}\quad
    (\mu,\sigma^2) = \lpa{\log 2,\log2 - \tfrac12}.
\end{align}
\end{thm}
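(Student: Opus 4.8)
The plan is to transfer everything to the unsigned Stirling weights $|s_{n,k}|n^k$, exploit a clean Bernoulli-sum representation there, and then pull the local limit theorem back. By Proposition~\ref{P:en} one has $|M_{n,k}|=\beta_n|s_{n,k}|\lpa{1+O(n^{-1}\log n)}$ uniformly in $1\le k\le n$, hence
\[
    \mathbb{P}(X_n^M=k):=\frac{|M_{n,k}|n^k}{\sum_j|M_{n,j}|n^j}
    =\frac{|s_{n,k}|n^k}{\sum_j|s_{n,j}|n^j}\,\lpa{1+O(n^{-1}\log n)}
    =\mathbb{P}(X_n^s=k)\lpa{1+O(n^{-1}\log n)}
\]
uniformly in $k$, where $X_n^s$ has law $|s_{n,k}|n^k/\sum_j|s_{n,j}|n^j$. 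Since $n^{-1}\log n=o(n^{-1/2})$, it will suffice to establish the LLT for $\{|s_{n,k}|n^k\}_k$ with the stated constants and then invoke this comparison; the only point where the approximate probabilities are \emph{not} enough is the variance of $X_n^M$ itself (its error would swamp the main term), which must instead be read off directly from the closed form \eqref{E:Pnv} as in the proof of Theorem~\ref{T:Matsunaga-numbers}, giving $\mathbb{E}(X_n^M)=\mu n+O(1)$ and $\mathbb{V}(X_n^M)=\sigma^2 n+O(1)$, so the $\sim$-hypotheses hold.

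For $X_n^s$ the generating polynomial $\sum_k|s_{n,k}|z^k=z(z+1)\cdots(z+n-1)$ yields
\[
    \mathbb{E}\lpa{v^{X_n^s}}=\prod_{0\le j\le n-1}\frac{nv+j}{n+j},
\]
which is exactly the probability generating function of a sum $X_n^s=\sum_{0\le j\le n-1}Y_j$ of \emph{independent} Bernoulli variables with $\mathbb{P}(Y_j=1)=\tfrac{n}{n+j}$. Hence $\mathbb{E}(X_n^s)=n(H_{2n-1}-H_{n-1})=n\log 2+\tfrac14+O(n^{-1})$ and $\mathbb{V}(X_n^s)=\sum_{0\le j\le n-1}\tfrac{nj}{(n+j)^2}=n\bigl(\log 2-\tfrac12\bigr)+O(\log n)$ by Euler–Maclaurin (the Riemann sum of $\int_0^1\tfrac{x}{(1+x)^2}\dd x$), identifying $(\mu,\sigma^2)=\lpa{\log 2,\log 2-\tfrac12}$; note $\sigma^2>0$. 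The central limit theorem $\mathscr{N}(\mu n,\sigma^2 n;n^{-1/2})$ for $X_n^s$ is then immediate from Lyapunov's condition (the $Y_j$ are $\{0,1\}$-valued with $\sum_j\mathbb{V}(Y_j)=\Theta(n)$), or equivalently from the Quasi-powers form $\mathbb{E}(v^{X_n^s})=\Gamma(n(v+1))\Gamma(n)/(\Gamma(nv)\Gamma(2n))=e^{nU(v)}\sqrt{\tfrac{2v}{v+1}}\,\lpa{1+O(n^{-1})}$, $U(v):=(v+1)\log(v+1)-v\log v-2\log 2$, obtained from Stirling's formula for $\log\Gamma$ uniformly on a fixed complex neighbourhood of $v=1$ (here $U(1)=0$, $U'(1)=\log2=\mu$, $U''(1)+U'(1)=\log2-\tfrac12=\sigma^2$).

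For the LLT of $X_n^s$ I would use Fourier inversion, $\mathbb{P}(X_n^s=k)=\frac1{2\pi}\int_{-\pi}^{\pi}\varphi_n(\theta)e^{-i\theta k}\dd\theta$ with $\varphi_n(\theta)=\prod_{j=0}^{n-1}\bigl(1-p_j+p_je^{i\theta}\bigr)$, $p_j=\tfrac{n}{n+j}$. On the central window $|\theta|\le n^{-1/2}\log n$, the quadratic expansion $\log\varphi_n(\theta)=i\theta\,\mathbb{E}(X_n^s)-\tfrac12\theta^2\mathbb{V}(X_n^s)+O(n|\theta|^3)$ produces the Gaussian main term (plus harmless Edgeworth corrections). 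Off the window one has the crude but decisive bound $|1-p+pe^{i\theta}|^2=1-2p(1-p)(1-\cos\theta)\le e^{-2p(1-p)(1-\cos\theta)}$, so $|\varphi_n(\theta)|\le\exp\bigl(-2(1-\cos\theta)\sum_{j}p_j(1-p_j)\bigr)\le e^{-c\theta^2 n}$, which for $|\theta|\ge n^{-1/2}\log n$ is $O(e^{-c(\log n)^2})$, negligible. Combining the two ranges gives $\sup_k\bigl|\sigma_n\mathbb{P}(X_n^s=k)-(2\pi)^{-1/2}e^{-(k-\mu n)^2/(2\sigma^2 n)}\bigr|=O(n^{-1/2})$ after absorbing the $O(1)$ shift between $\mathbb{E}(X_n^s)$ and $\mu n$ and the $O(\log n)$ discrepancy between $\mathbb{V}(X_n^s)$ and $\sigma^2 n$ (each contributes $O(n^{-1/2})$ to the scaled deviation since $|x|\phi(x)$ and $x^2\phi(x)$ are bounded). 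Multiplying through by the uniform factor $1+O(n^{-1}\log n)$ from the first paragraph transfers this to $X_n^M$, yielding \eqref{E:Yn}.

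The main obstacle is not conceptual but bookkeeping: (i) making the $\log\Gamma$ expansion of $\mathbb{E}(v^{X_n^s})$ uniform on a complex neighbourhood of $v=1$ so that the Quasi-powers Gaussian theorem and the central-window estimate both apply with an explicit $O(n^{-1})$ error; and (ii) the observation—easy to overlook—that the variance of the genuine weighted Matsunaga distribution cannot be inferred from the approximate probabilities (the induced error $O(n^{-1}\log n)\cdot\mathbb{E}((X_n^s)^2)=O(n\log n)$ dominates the true variance $\Theta(n)$), so one must instead extract $\mathbb{E}(X_n^M)$ and $\mathbb{V}(X_n^M)$ from the closed form \eqref{E:Pnv} together with the ratio estimate \eqref{E:en-ratio}, exactly paralleling the mean/variance computation in the proof of Theorem~\ref{T:Matsunaga-numbers}. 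No step requires an idea beyond those already used there.
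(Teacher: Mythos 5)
Your proposal is correct, and it reaches \eqref{E:Yn} by a genuinely different route from the paper. Both arguments share the reduction to the Stirling weights via Proposition~\ref{P:en} and both extract the exact mean and variance from the closed form \eqref{E:Pnv}/\eqref{E:Pnnv} rather than from the approximate probabilities (your remark (ii) is exactly right: the uniform relative error $O(n^{-1}\log n)$ is too coarse to recover a $\Theta(n)$ variance from second moments of size $\Theta(n^2)$, and the paper indeed computes $\mathbb{E}(Y_n)=\mu n+\tfrac14-\cdots$ and $\mathbb{V}(Y_n)=\sigma^2n-\tfrac18-\cdots$ from \eqref{E:Pnnv}). Where you diverge is the local limit theorem itself: the paper substitutes $k=\mu n+x\sigma\sqrt n$ into the Moser--Wyman saddle-point approximation \eqref{E:mw2}, solves the saddle-point equation for $r$, and first has to locate the peak by maximizing $\varphi(\rho)$ over $\rho\in(0,\infty)$; you instead observe that $\sum_k|s_{n,k}|(nv)^k=\prod_{0\le j\le n-1}(nv+j)$ is real-rooted, so the Stirling-weighted variable is a sum of independent Bernoullis with $p_j=\tfrac{n}{n+j}$, whence the constants appear transparently as $\mu=\int_0^1\frac{\dd x}{1+x}=\log 2$ and $\sigma^2=\int_0^1\frac{x}{(1+x)^2}\dd x=\log 2-\tfrac12$, and the LLT with rate $O(n^{-\frac12})$ follows from classical Fourier inversion (your off-window bound $|\varphi_n(\theta)|\le e^{-cn\theta^2}$ is the standard and correct one). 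This is precisely the ``Harper real-rootedness'' device the paper itself invokes for \href{https://oeis.org/A220884}{A220884} in Section~\ref{S:three}, so it is squarely within the paper's toolkit. Your route is more elementary and self-contained (no asymptotics of $|s_{n,k}|$ needed and no peak-location step); the paper's route buys the explicit second-order correction term in \eqref{E:Xn} and a template that transfers verbatim to the gamma-quotient saddle-point analyses of the related sequences in Section~\ref{S:three}. Two cosmetic points: the Euler--Maclaurin error in $\sum_j p_j(1-p_j)$ is actually $O(1)$ (namely $-\tfrac18+O(n^{-1})$), not just $O(\log n)$, though your bound suffices; and your quasi-powers digression, while a nice cross-check of $(\mu,\sigma^2)$, is redundant once the Bernoulli decomposition is in hand.
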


For convenience, define
\[
    \mathbb{E}\lpa{v^{Y_n}}
    := \frac{P_n(nv)}{P_n(n)}\qquad(n\ge2).
\]
Then, by \eqref{E:Pnv}, we have, for $n\ne3$
\begin{align}\label{E:Pnnv}
    \frac{P_n(nv)}{n!}
    = \sum_{0\le j<n}\binom{vn+n-j-1}{n-j}
    (-1)^{j}\beta_{n-j}.
\end{align}

\subsection{The total count}
The relation \eqref{E:Pnnv} implies that
\begin{equation}\label{E:Pnn}
\begin{split}
    \frac{P_n(n)}{n!}
    &= \frac1{n!}\sum_{1\le k\le n}|M_{n,k}|n^k
    = \sum_{0\le j<n}\binom{2n-1-j}{n-1}(-1)^j\beta_{n-j}\\
    &= \binom{2n-1}{n}\beta_n-\binom{2n-2}{n-1}\beta_{n-1}+-\cdots\\
    &= \frac{4^n\beta_n}{2\sqrt{\pi n}}
    \Lpa{1-\frac{4W(n)+1}{8n}+O\lpa{n^{-2}(\log n)^2}},
\end{split}
\end{equation}
in contrast to Matsunaga's identity \eqref{E:matsunaga}:
\[
    \frac1{n!}\sum_{1\le k\le n}M_{n,k}n^k = B_n-1.
\]
Their ratio is asymptotic to
\[
    \frac{\displaystyle\sum_{1\le k\le n}|M_{n,k}|n^k}
    {\displaystyle\sum_{1\le k\le n}M_{n,k}n^k}
    \sim \frac{W(n)}{2\sqrt{\pi}}\, n^{-\frac32}4^n.
\]
The sequence $P_n(n)$ begins for $n\ge1$ with $\{0, 6, 30, 3120,
135120, 11980080, 1231806240, \dots\}$, and
\[
    \left\{\frac{P_n(n)}{n!}\right\}_{n\ge1}
    = \{0, 3, 5, 130, 1126, 16639, 244406, 4107921, 74991344,
    1486313664,\dots\}.
\]

\subsection{Mean and variance}
Let $(\mu,\sigma^2) = \lpa{\log 2,\log2 - \tfrac12}$. From
\eqref{E:Pnnv}, we obtain
\[
    \sum_{1\le k\le n}k|M_{n,k}|n^k
    =n\cdot n!\sum_{0\le j<n}\binom{2n-1-j}{n-j}
	(-1)^j\beta_{n-j}\lpa{H_{2n-j-1}-H_{n-1}},
\]
for $n\ne3$. Then
\begin{align*}
	\mathbb{E}(Y_n)
    &= \frac1{P_n(n)}\sum_{1\le k\le n}k|M_{n,k}|n^k
    = \mu n +\frac14 -\frac{4W(n)-1}{16n}
    +O\lpa{n^{-2}(\log n)^2}.
\end{align*}	
Similarly,
\[
    \mathbb{V}(Y_n)
    = \sigma^2 n -\frac18 -\frac1{12n}+O\lpa{n^{-2}(\log n)^2}.
\]

\subsection{Asymptotics of $|s_{n,k}|$}
To prove the local limit theorem of $Y_n$, we need finer asymptotic
approximations to the Stirling numbers of the first kind; see
\cite{Bender1973,Moser1958,Temme1993} for more information.
\begin{itemize}
    \item For $1\le k=O(\log n)$ (see \cite{Hwang1995}),
    \begin{align}\label{E:mw1}
        \frac{|s_{n,k}|}{n!}
        &= \frac{(\log n)^{k-1}}{n\Gamma(1+\frac{k-1}{\log n})
        (k-1)!}\Lpa{1+O\lpa{k(\log n)^{-2}}}.
    \end{align}
    \item For $k\to\infty$ and $n-k\to\infty$ (see
    \cite{Bender1973,Moser1958}),
    \begin{align}\label{E:mw2}
        |s_{n,k}|
        = \frac{r^{-k}\Gamma(n+r)}
        {\sqrt{2\pi V}\,\Gamma(r)}
        \Lpa{1+O\lpa{V^{-1}}},
    \end{align}
    where $r=r_{n,k}>0$ solves the saddle-point equation $r(\psi(n+r)-\psi(r))=k$
    and $V := k+r^2(\psi'(n+r)-\psi'(r))$.
    Here $\psi(t)$ denotes the digamma function (derivative of $\log\Gamma(t)$).
    \item For $0\le \ell = n-k =o(\sqrt{n})$ (see \cite{Moser1958}),
    \begin{align*}
        |s_{n,k}| = \frac{n^{2\ell}}{\ell! 2^\ell}
        \Lpa{1+O\lpa{(\ell+1)^2 n^{-1}}}.
    \end{align*}
\end{itemize}

\subsection{Local limit theorem}
We prove Theorem~\ref{T:weighted-M}.
\begin{figure}[!ht]
    \begin{center}
        \begin{tabular}{cc}
            \includegraphics[height=3cm]{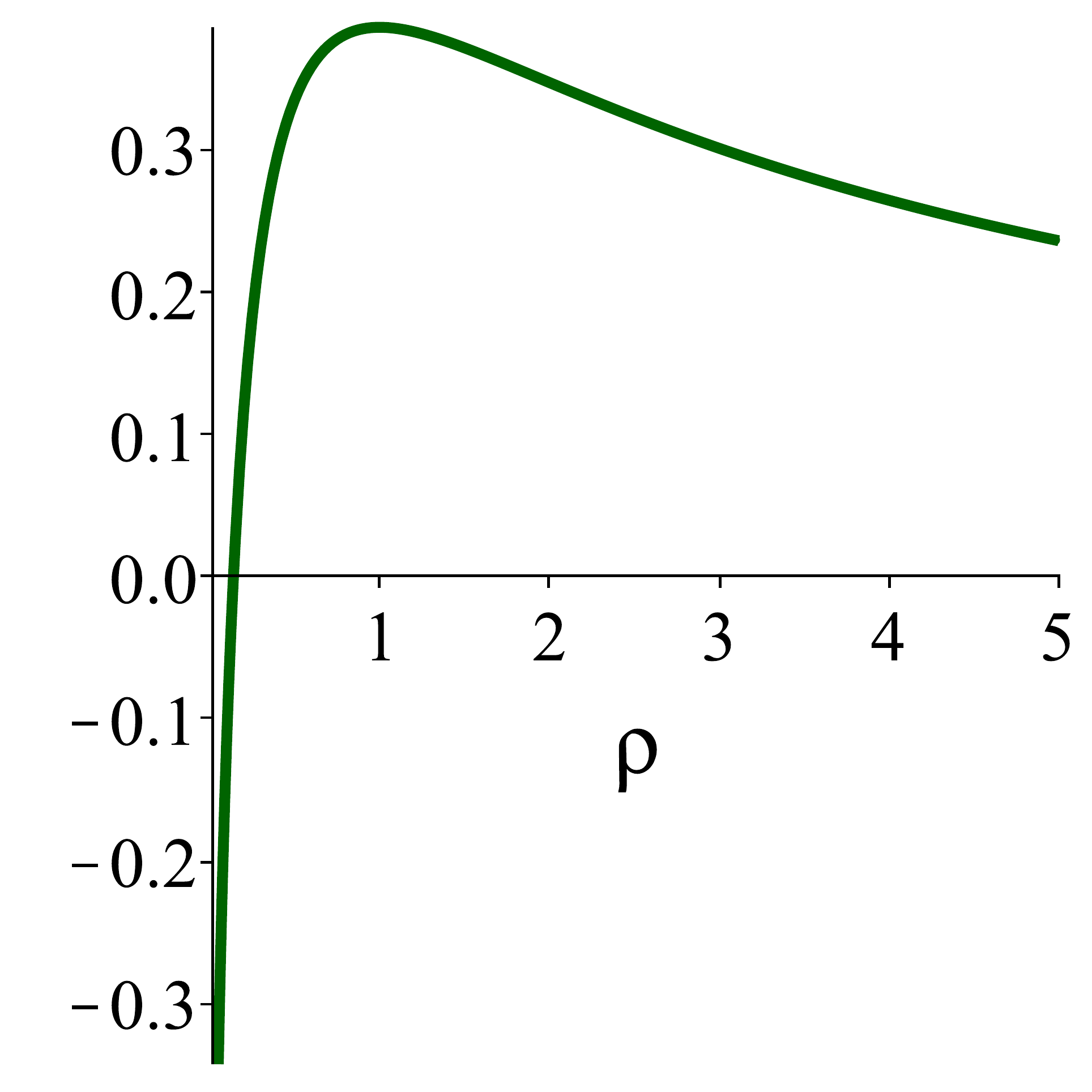}
            & \includegraphics[height=3cm]{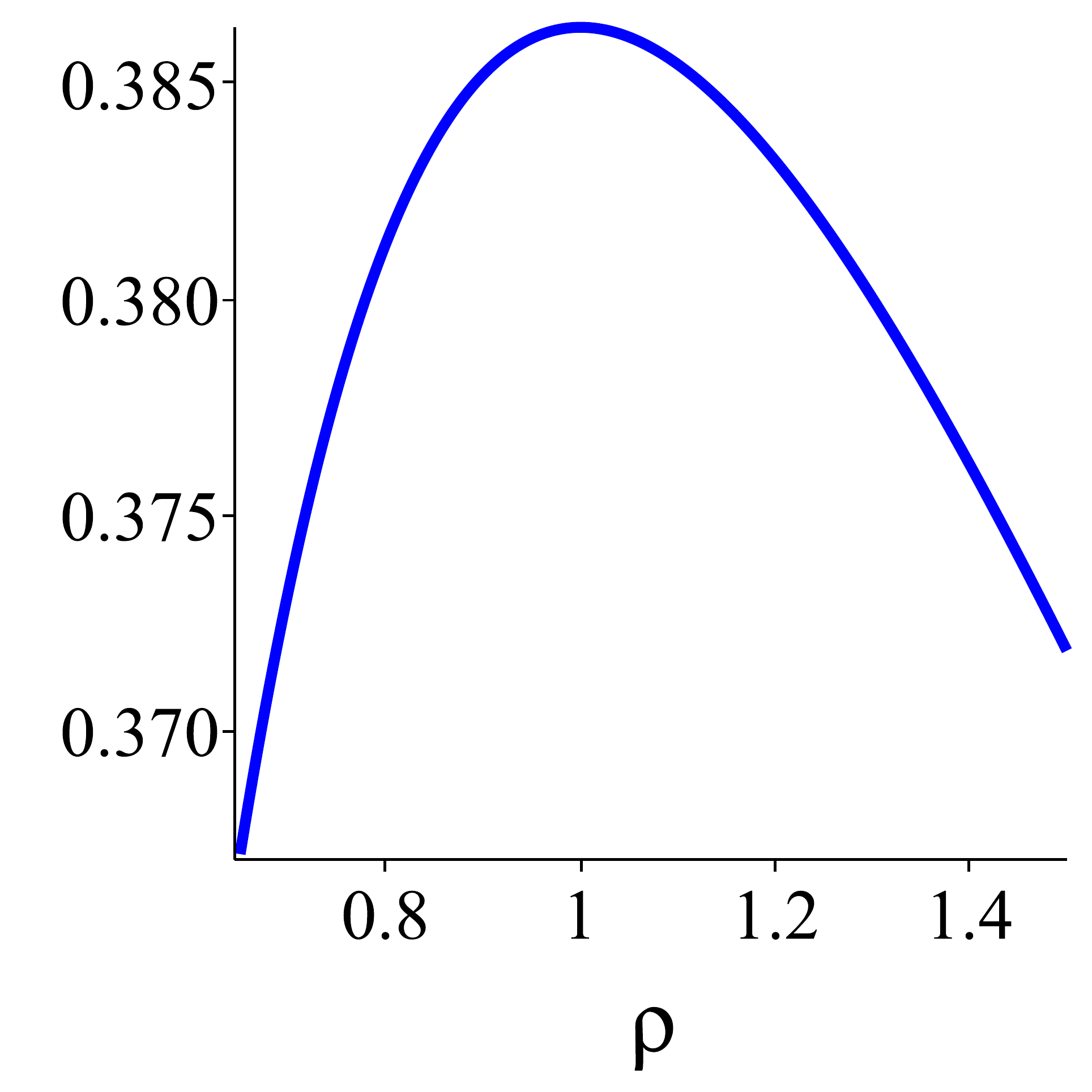}\\
            $\varphi(\rho)$, $\rho\in(0,5)$ &
            $\varphi(\rho)$, $\rho\in(0.65,1.5)$
        \end{tabular}
    \end{center}
    \caption{The behaviors of $\varphi(\rho)$.} \label{F:varphi}
\end{figure}
We first identify $k$ at which $|M_{n,k}|n^k$ reaches the maximum
value for fixed $n$. We substitute first $k\sim\tau n$, $\tau\in[0,1]$,
and $r\sim\rho n$, $\rho>0$, in the saddle-point approximation
\eqref{E:mw2}, and obtain, by using \eqref{E:bnk-ae} and Stirling's
formula:
\begin{align*}
    \log\frac{|M_{n,k}|n^k}{\beta_n}
    &\sim \log(|s_{n,k}|n^k)
    \sim \lpa{\log n +\varphi(\rho)}n,
\end{align*}
where $\varphi(\rho) := \rho(1-\log\rho)\log(1+\rho^{-1})
+\log(1+\rho)-1$, and the pair $(\rho, \tau)$ is connected by the
relation $\rho\log(1+\rho^{-1})=\tau$. A simple  calculus shows that
the image of $\varphi(\rho)$ (see Figure~\ref{F:varphi}) lies in the range $(-\infty,\varphi(1)]$
for $\rho\in(0,\infty)$, where the maximum of $\varphi$ at $\rho=1$
equals $\varphi(1)=2\log 2-1\approx 0.38629\dots$. Then $\rho=1$ implies that
$k\sim \mu n$, where $\mu=\log 2$.

Once the peak of $|M_{n,k}|n^k$ is identified to be at $k\sim \mu n$,
we then refine all the asymptotic expansions by writing $k=\mu n +
x\sigma \sqrt{n}$, and then solving the saddle-point equation we obtain
\[
    r = n+\frac{x\sqrt{n}}{\sigma}
    +\frac{x^2-2\sigma^2}{8\sigma^2}
    +\frac{x}{96\sigma^7\sqrt{n}}
    \lpa{12\sigma^4-8\sigma^2x^2-6\sigma^2+3x^2}
    +O\Lpa{\frac{1+x^4}{n}},
\]
whenever $x=o(n^{\frac16})$. Substituting this expansion into
Moser and Wyman's saddle-point approximation \eqref{E:mw2} and using
\eqref{E:Pnn}, gives the local Gaussian behavior of $|M_{n,k}|n^k$:
\begin{equation}\label{E:Xn}
\begin{split}
    \mathbb{P}(Y_n=k)
    &= \frac{|M_{n,k}|n^k}{P_n(n)}\\
    &= \frac{e^{-\frac12x^2}}{\sqrt{2\pi \sigma^2n}}
    \Lpa{1+\frac{x((4\sigma^2-1)x^2-3(2\sigma^2-1))}
    {24\sigma^3\sqrt{n}}+O\Lpa{\frac{(1+x^4)\log n}{n}}},
\end{split}
\end{equation}
uniformly for $k=\mu n + x\sigma\sqrt{n}$ with $x=o(n^{\frac16})$.

When $|x|$ lies outside the central range, say $|x|\ge n^{\frac18}$
(for simplicity), we use the crude bounds (with $k_\pm :=
\mu n \pm \sigma n^{\frac58}$)
\begin{align*}
    \mathbb{P}\lpa{Y_n\le k_-}
    +\mathbb{P}\lpa{Y_n\ge k_+}
    &= O\llpa{n \max_{|k-\mu n|\ge \sigma n^{\frac58}}
    \frac{|s_{n,k}|n^k}{n!}}
    = O\Lpa{\frac{n|s_{n,k_-}|n^{k_-}}{n!}}
    = O\lpa{n^{\frac12} e^{-\frac12 n^{\frac14}}},
\end{align*}
by \eqref{E:Xn}. This together with \eqref{E:Xn} completes the proof
of Theorem~\ref{T:weighted-M}.

\subsection{Three sequences with the same $\mathscr{N}(\mu n,\sigma^2n)$ asymptotic distribution}
\label{S:three}

Our proof of Theorem~\ref{T:weighted-M} also implies the same local
limit theorem \eqref{E:Yn} for the sequence $|s_{n,k}|n^{k-1}$
(\href{https://oeis.org/A056856}{A056856}).

Two other OEIS sequences with the same asymptotic behaviors are
\begin{itemize}
	\item \href{https://oeis.org/A220883}{A220883}:
	\[
	    [z^k]\prod_{1\le j<n}(j+(n+1)z)
		= |s_{n,k+1}|(n+1)^k.
	\]
	
	\item \href{https://oeis.org/A260887}{A260887}:
	\[
	    [z^k]\prod_{2\le j\le n}(j+nz)
		= n^k\sum_{0\le j\le k}|s_{n+1,j+1}|(-1)^{k-j}.
	\]
\end{itemize}
The proof of Theorem~\ref{T:weighted-M} also extends to these cases;
for example, by Moser and Wyman's saddle-point analysis, we first have
\[
	[z^k]\prod_{2\le j\le n}(j+nz)
	= \frac{r^{-k}\Gamma(n+1+r)}{\sqrt{2\pi V}\,\Gamma(r+2)}
	\lpa{1+O\lpa{V^{-1}}},
\]
where $r>0$ solves the equation $n(\psi(n+r+1)-\psi(r+2))=k$ and $V
:= k+r^2(\psi'(n+r+1)-\psi'(r+2))$. Then we follow the same proof of
Theorem~\ref{T:weighted-M} and obtain, for
\href{https://oeis.org/A260887}{A260887},
\[
	\left\{[z^k]\prod_{2\le j\le n}(j+nz)\right\}_{k}
	\simeq\LLT\lpa{\mu n,\sigma^2 n, n^{-\frac12}},
\]
The same result holds for \href{https://oeis.org/A220883}{A220883}.
In contrast, the neighboring sequence
\href{https://oeis.org/A220884}{A220884}
\[
	[z^k]\prod_{2\le j\le n}(j+(n+1-j)z)
\]
satisfies $\mathscr{N}(\frac12n, \frac16n; n^{-\frac12})$, which can
be proved either by Harper's real-rootedness approach or the
classical characteristic function approach (using L\'evy's continuity
theorem); see \cite[p.~108]{Hwang2020}.
\section{Distribution of Arima numbers}
\label{S:arima}

\subsection{Yoriyuki Arima and his 1763 book on Bell numbers}

Yoriyuki Arima (1714--1783) was born in Kurume Domain and then became
the Feudal lord there at the age of 16. As was common at that time,
he also used several different names during his life time. He
apprenticed himself to Nushizumu Yamaji ({\footnotesize{山路主住}}),
and later wrote over 40 books during 1745--1766. He selected and
compiled 150 typical questions from these books and published the
solutions in the compendium book \emph{Shuki Sanpo} \cite{Arima1769}
({\footnotesize{拾璣算法}}) in five volumes under the pen-name Bunkei
Toyoda ({\footnotesize{豐田文景}}). This influential book was regarded highly in Wasan at that time and played a significant role in
popularizing the theory and techniques developed in the Seki School.
Not only the materials are well-organized, but the style is
comprehensible, which is unique and was thought to be a valuable
contribution to the developments of Wasan in and after the Edo period.
For more information on Arima's life and mathematical works, see
\cite{Fujiwara1957}.

Our next focus in this paper lies on his 1763 book \cite{Arima1763},
which is devoted to two different procedures of computing Bell
numbers, a summary of which being given as follows.

\begin{itemize}

\item Matsunaga's procedure \eqref{E:matsunaga} is first examined
(pp.~\href{https://kotenseki.nijl.ac.jp/biblio/100247598/viewer/3}{3--5}), and the values of $B_n$ for $n=2,\dots, 13$ are computed.

\item Values of the Matsunaga numbers $M_{n,k}$ are listed on page
\href{https://kotenseki.nijl.ac.jp/biblio/100247598/viewer/6}{6}.

\item Computing $B_n$ through the better recurrence
\eqref{E:bell-formulae} (pp.~\href{https://kotenseki.nijl.ac.jp/biblio/100247598/viewer/7}{7--12}), which is simplified with the
additional (common) tabular trick (trading off space for computing time) that
\[
	b_{n,k} := \binom{n-1}{k-1}B_{n-k}
	= \begin{cases}
		\displaystyle
		\sum_{1\le j<n} b_{n-1,j},
		&\text{if }k=1;\\
		\displaystyle
		\frac{n-1}{k-1}\,b_{n-1,k-1},
		&\text{if }2\le k\le n.
	\end{cases}
\]
It follows that $B_n = \sum_{1\le k\le n}b_{n,k}$.

\item Stirling numbers of the first kind $s_{n,k}$ are tabulated for
$2\le n\le 10$ (pp.~\href{https://kotenseki.nijl.ac.jp/biblio/100247598/viewer/12}{12--16}) via the expansion of
$x(x-1)\cdots(x-n+1)$.

\item Then the next twenty pages or so (pp.~\href{https://kotenseki.nijl.ac.jp/biblio/100247598/viewer/17}{17--36}) give a detailed
inductive discussion to compute the number of arrangements when there
are $k_1$ blocks of size $1$, $k_2$ blocks of size $2$, etc.
(essentially the coefficients of the Bell polynomials):
\[
	B_n(k_1,k_2,\dots) = \frac{n!}{1!^{k_1}2!^{k_2}\cdots
	k_1!k_2!\cdots}.
\]

\item Bell numbers are computed (pp.~\href{https://kotenseki.nijl.ac.jp/biblio/100247598/viewer/36}{36--41}) by collecting all
different block configurations (or adding the coefficients in the
Bell polynomials).

\item Matsunaga numbers $M_{n,k}$ are computed on pages \href{https://kotenseki.nijl.ac.jp/biblio/100247598/viewer/42}{42--47}, where
$\beta_n$ is given on page \href{https://kotenseki.nijl.ac.jp/biblio/100247598/viewer/43}{43}.

\item The remaining pages (\href{https://kotenseki.nijl.ac.jp/biblio/100247598/viewer/48}{48--55}) discuss multinomial coefficients.

\end{itemize}

\subsection{Arima numbers}
In this section, for completeness, we prove the LLT of the Arima
numbers
\[
    A_{n,k}:= \binom{n}{k}B_{n-k},
\]
which appeared in Arima's 1763 book (pages \href{https://kotenseki.nijl.ac.jp/biblio/100247598/viewer/7}{7--8} in file order) and also
sequence \href{https://oeis.org/A056857}{A056857} in the OEIS; its
row-reversed version corresponds to
\href{https://oeis.org/A056860}{A056860}. Since our main interest
lies in the asymptotic distribution, we consider
$\binom{n}{k}B_{n-k}$ instead of the original
$\binom{n-1}{k}B_{n-1-k}$.

Another closely related sequence is
\href{https://oeis.org/A175757}{A175757} (number of blocks of a given
size in set partitions), which is the same as A056857 but without the
leftmost column.

Several interpretations or contexts where these sequences arise can
be found on the OEIS page; see also \cite[p.~178]{Mansour2013} for
the connection to weak records in set partitions. For example, it
gives the size of the block containing $1$, as well as the number of
successive equalities in set partitions.

On the other hand, the sequence
\href{https://oeis.org/A005578}{A005578} ($\cl{\frac132^n}$) is
sometimes referred to as the Arima sequence.

\begin{table}[!ht]
\begin{center}
\begin{tabular}{c|rrrrrrrr|c}
	$n\backslash k$
	& $0$ & $1$ & $2$ & $3$ & $4$ & $5$ & $6$ & $7$
    & $B_{n+1}$ \\ \hline
    $1$ & $1$ & $1$ &&&&&&& $2$ \\
    $2$ & $2$ & $2$ & $1$ &&&&&& $5$ \\
    $3$ & $5$ & $6$ & $3$ & $1$ &&&&& $15$\\
	$4$ & $15$ & $20$ & $12$ & $4$ & $1$ &&&& $52$\\
    $5$ & $52$ & $75$ & $50$ & $20$ & $5$ & $1$ &&& $203$ \\
    $6$ & $203$ & $312$ & $225$ & $100$
    & $30$ & $6$ & $1$ && $877$ \\
    $7$ & $877$ & $1421$ & $1092$ & $525$ & $175$ & $42$ & $7$ & $1$
    & $4140$
\end{tabular}
\end{center}
\caption{The values of $A_{n,k}$ for $n=1,\dots,7$ and
$1\le k\le n$ as already given on
\cite[Page 8]{Arima1763}.}\label{T:arima}
\end{table}

\begin{thm}
\begin{align}\label{E:arima-llt}
    \left\{\binom{n}{k}B_{n-k}\right\}_k
    \simeqq \LLT\lpa{\log n,\log n; (\log n)^{-\frac12}}.
\end{align}
\end{thm}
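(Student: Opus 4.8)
The statement to prove is that the Arima numbers $A_{n,k} = \binom{n}{k} B_{n-k}$, suitably normalized, satisfy the local limit theorem $\LLT(\log n, \log n; (\log n)^{-1/2})$, i.e.\ the same Gaussian behavior with logarithmic mean and variance as the unsigned Stirling cycle numbers $|s_{n,k}|/n!$ and (by Theorem~\ref{T:Matsunaga-numbers}) as the unsigned Matsunaga numbers $|M_{n,k}|$. The overarching strategy is to compare $A_{n,k}$ with $|s_{n,k}|$ (or rather with the Stirling cycle distribution), exactly as was done for $|M_{n,k}|$. The first step is to produce a usable generating-function identity. Since $\sum_k A_{n,k} v^k = \sum_k \binom{n}{k} B_{n-k} v^k$, I would recognize this as a binomial (Hurwitz-type) convolution and write it in terms of the EGF of the Bell numbers: $\sum_n \bigl(\sum_k A_{n,k}v^k\bigr)\frac{z^n}{n!} = e^{vz} e^{e^z-1}$. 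From the $\sum_k A_{n,k} = B_{n+1}$ normalization (visible in Table~\ref{T:arima}), the associated random variable $X_n$ has $\mathbb{E}(v^{X_n}) = \frac{1}{B_{n+1}}\sum_k \binom{n}{k}B_{n-k} v^k$. So the moment generating function is $[z^n]\bigl(n!^{-1}e^{vz}e^{e^z-1}\bigr)\big/\bigl(n!^{-1}B_{n+1}\bigr)$, which is amenable to the saddle-point method.

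\medskip

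The second step is the mean and variance. Differentiating the identity $\sum_k A_{n,k}v^k$ at $v=1$, one has $\sum_k k A_{n,k} = \sum_k k\binom{n}{k}B_{n-k} = n\sum_{k\ge1}\binom{n-1}{k-1}B_{n-k} = n\sum_{j\ge0}\binom{n-1}{j}B_{n-1-j} = nB_n$ (using \eqref{E:bell-formulae}), so $\mathbb{E}(X_n) = nB_n/B_{n+1}$. Since $B_{n+1}/B_n \sim e^{W(n+1)}\sim n/W(n) \sim n/\log n$ (a standard consequence of the saddle-point asymptotics for Bell numbers, parallel to \eqref{E:en}), one gets $\mathbb{E}(X_n)\sim W(n)\sim \log n$; a finer expansion of $B_{n+1}/B_n$ yields $\mathbb{E}(X_n) = \log n + \gamma + O(\text{lower order})$, matching the Stirling cycle mean $H_n$ to the required precision. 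A second derivative (or the factorial-moment identity $\sum_k k(k-1)A_{n,k} = n(n-1)B_{n-1}$, again via \eqref{E:bell-formulae}) gives $\mathbb{V}(X_n)\sim \log n$ similarly. This is essentially a Bell-number analogue of the computation already carried out for $X_n$ in the proof of Theorem~\ref{T:Matsunaga-numbers}.

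\medskip

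The third and central step is the limit laws. For the CLT, I would check that $\mathbb{E}(v^{X_n})$ fits the Quasi-powers framework: by singularity analysis / saddle-point on $[z^n]e^{vz}e^{e^z-1}$, one expects $\mathbb{E}(v^{X_n}) = \exp\bigl((v-1)(\log n + O(1)) + \text{error}\bigr)$ uniformly for $v$ in a neighborhood of $1$ — in fact one can show $\mathbb{E}(v^{X_n})\sim n^{v-1}/\Gamma(v+1)\cdot(\text{correction})$, exactly the same shape found for the Matsunaga numbers in the excerpt (the $e^{vz}$ factor shifts the Bell-number saddle point by a bounded amount, producing the $n^{v-1}$ factor, and the $e^{e^z-1}$ part contributes the $1/\Gamma(v+1)$ through its behaviour near the saddle). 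Hwang's Quasi-powers theorem then gives $\mathscr{N}(\log n,\log n;(\log n)^{-1/2})$. For the LLT, I would instead establish the pointwise approximation $\mathbb{P}(X_n=k) = \frac{|s_{n,k}|}{n!}\bigl(1+O(n^{-1}\log n)\bigr)$ uniformly in $1\le k\le n$ — or more simply compare $A_{n,k}$ directly to $|M_{n,k}|$, whose LLT is already proved. Concretely, $\binom{n}{k}B_{n-k} = \binom{n}{k}(\beta_{n-k+1}+\beta_{n-k})$, and peeling off using the Bell/EGF identity one can write $A_{n,k}$ as $|s_{n,k}|$ times $1+o(1)$ with uniform control, after which \eqref{E:Mnk-llt} (via \eqref{E:bnk-ae}) transfers verbatim.

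\medskip

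The main obstacle is the uniformity of the comparison $A_{n,k}\asymp |s_{n,k}|$ across the \emph{full} range $1\le k\le n$, including the large-$k$ tail where $B_{n-k}$ is small and $\binom{n}{k}$ is not, and where the Stirling asymptotics switch regimes (\eqref{E:mw1} vs.\ \eqref{E:mw2} vs.\ the $n-k=o(\sqrt n)$ regime). In the central window $k = \log n + O(\sqrt{\log n})$ the comparison is clean — both sides are governed by the same $(\log n)^{k-1}/(k-1)!$ profile — but outside it one needs crude exponential tail bounds on both $\sum_{k\ge K}A_{n,k}$ and $\sum_{k\ge K}|s_{n,k}|$ to absorb the discrepancy into the error term; this is exactly the kind of tail estimate handled in the proof of Theorem~\ref{T:weighted-M} and I would reuse that style. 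A secondary, more bookkeeping obstacle is extracting enough terms of the $B_{n+1}/B_n$ expansion (hence of $W(n)$-type corrections) to pin down the second-order terms in $\mathbb{E}(X_n)$ and $\mathbb{V}(X_n)$ to the precision claimed, but this is routine given the saddle-point machinery already invoked for $\beta_n$ in the excerpt.
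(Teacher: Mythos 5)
Your setup — the bivariate EGF $e^{e^z-1+vz}$, the exact mean $nB_n/B_{n+1}$ and the factorial-moment identity for the variance, and a saddle-point/Quasi-powers argument for the CLT — is exactly the paper's route. But the step you lean on for the local limit theorem is wrong. You claim $\mathbb{P}(X_n=k)=\frac{|s_{n,k}|}{n!}\lpa{1+O(n^{-1}\log n)}$ uniformly (equivalently, a verbatim transfer from \eqref{E:Mnk-llt}), and in support of this you assert $\mathbb{E}(X_n)=\log n+\gamma+\cdots$, ``matching the Stirling cycle mean $H_n$.'' Neither holds. The paper's own computation gives $\mu_n=nB_n/B_{n+1}=W(n)\lpa{1+O(n^{-1}(\log n)^2)}$, and $W(n)=\log n-\log\log n+o(\log\log n)$; the normalized PGF is asymptotically $e^{W(n)(v-1)}$, i.e.\ the Arima distribution is asymptotically Poisson with parameter $W(n)$, not the Stirling cycle law with PGF $\sim vn^{v-1}/\Gamma(v+1)$ and mean $H_n=\log n+\gamma+\cdots$. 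The two centers differ by $\log\log n+\gamma\to\infty$, so at a fixed standardized location the pointwise ratio of the two mass functions is $1+O\lpa{\log\log n/\sqrt{\log n}}$ at best — nowhere near $1+O(n^{-1}\log n)$, and not even small enough to recover the stated rate $(\log n)^{-1/2}$ by transfer. (The theorem's parameters $(\log n,\log n)$ are only asymptotic equivalences, which is why both sequences can satisfy the same-looking LLT without their mass functions being pointwise close.)

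The fix, which is what the paper does, is to prove the LLT \emph{directly}: approximate $B_{n-k}$ by the saddle-point expansion \eqref{E:Bn1} together with \eqref{E:Wn1}, approximate $\binom{n}{k}$ by Stirling's formula, and carry out the local Gaussian analysis of the product $\binom{n}{k}B_{n-k}$ around $k=W(n)+x\sqrt{W(n)}$ in the style of the proof of \eqref{E:Yn}, with crude bounds showing negligibility outside the central window. Your instinct to reuse the tail-bound technique from Theorem~\ref{T:weighted-M} is right; it is the central-window comparison to $|s_{n,k}|$ (or to $|M_{n,k}|$, or to \eqref{E:mw1}) that must be abandoned in favor of a direct expansion of $\binom{n}{k}B_{n-k}$ itself.
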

The same LLT holds for the sequence
\href{https://oeis.org/A175757}{A175757}, and for
\href{https://oeis.org/A056860}{A056860}, we have
\[
    \left\{\binom{n}{k}B_{k}\right\}_k
    \simeqq \LLT\lpa{n-\log n,\log n; (\log n)^{-\frac12}}.
\]
\begin{proof}
Recall first the known saddle-point approximation to Bell numbers
(see \cite{deBruijn1981,Moser1955})
\begin{equation}\label{E:Bn1}
\begin{split}
    B_n &= n![z^n]e^{e^z-1}\\
	&=\frac{e^{(w+\frac1{w}-1)n-1}}
    {\sqrt{w+1}}\Lpa{1
	-\frac{w^2(2w^2+7w+10)}{24n(w+1)^3}
	+O\lpa{n^{-2}(\log n)^2}}.
\end{split}	
\end{equation}
where $w=W(n)$.
From this and the asymptotic expansion \eqref{E:Wn1}, we can quickly
see why \eqref{E:arima-llt} holds. Since $\binom{n}{k}<2^n$ for all
$k$, and $\log B_n\sim nw\sim n\log n -n\log\log n$, meaning that
$B_n$ is very close to factorial, and thus larger than the binomial
term. The largest term of $B_{n-k}$ is when $k=0$. More precisely,
when $k$ is small,
\begin{align}\label{E:arima-heuristic}
    \binom{n}{k}\frac{B_{n-k}}{B_{n}}
    \sim \frac{n^k}{k!} \times n^{-k} (\log n)^{k}
    = \frac{(\log n)^{k}}{k!},
\end{align}
and thus a CLT with logarithmic mean and logarithmic variance is
naturally expected.

For a rigorous proof, we begin with the calculation of the mean.
First,
\[
    A_{n,k} = n![z^nv^k]
    e^{e^z-1+vz},
\]
and from this exponential generating function, we can derive the
(exact) mean and the variance to be
\[
    \mu_n := \frac{nB_n}{B_{n+1}}, \quad\text{and}
    \quad \sigma_n^2 := \frac{n(n-1)B_{n-1}+nB_n}{B_{n+1}}
    -\Lpa{\frac{nB_n}{B_{n+1}}}^2,
\]
respectively. The asymptotic mean and variance then follow from
\eqref{E:Bn1} and the asymptotic expansion \eqref{E:Wn1}; indeed,
finer expansions give
\begin{align*}
    \mu_n &= w\Lpa{1-\frac{w^2}{2n(w+1)^2}
	+O\lpa{n^{-2}(\log n)^2}},\\
	\sigma_n^2 &= w\Lpa{1-\frac{w(3w+2)}{2n(w+1)^2}
	+O\lpa{n^{-2}(\log n)^2}}.
\end{align*}

An alternative approach by applying saddle-point method and
Quasi-powers theorem \cite{Hwang1998, Flajolet2009} is as follows.
First, we derive, again by saddle-point method, the asymptotic
approximation
\begin{align*}
    n![z^n]e^{e^z-1+vz}
    &= \frac{n!}{2\pi i}\oint_{|z|=r}
    z^{-n-1} e^{e^z-1+vz}\dd z\nonumber \\
    &= \frac{n!r^{-n}e^{\frac nr-1+vr}}
    {\sqrt{2\pi((r+1)n-vr^2)}}
    \lpa{1+O\lpa{rn^{-1}}},
\end{align*}
uniformly for complex $v$ in a small neighborhood of unity
$|v-1|=o(1)$, where $r=r(v)>0$ solves the equation $re^r+vr=n$. For
large $n$, a direct bootstrapping argument gives the asymptotic
expansion
\[
    r = w-\frac{vw^2}{n(w+1)}
    -\frac{v^2w^3(w^2-2)}{2n^2(w+1)^3}+O\lpa{n^{-3}(\log n)^3},
\]
uniformly for bounded $v$. From these expansions, we then obtain
\[
    \frac{n![z^n]e^{e^z-1+vz} }{n![z^n]e^{e^z-1+z}}
    = e^{w(v-1)}\lpa{1+O\lpa{n^{-1}\log n}},
\]
uniformly for $|v-1|=o(1)$. This implies an asymptotic
Poisson($W(n)$) distribution for the underlying random variables,
and, in particular, the CLT $\mathscr{N}(\log n,\log n)$ follows.
The stronger LLT \eqref{E:arima-llt} is proved by \eqref{E:Bn1} and
standard approximations for binomial coefficients, following the same
procedure used in the proof of \eqref{E:Yn}; in particular, we use
the expansion of $\binom{n}{k}$:
\[
    \binom{n}{k}
    = \frac{\tau^{-\tau n}(1-\tau)^{-(1-\tau)n}}
    {\sqrt{2\pi \tau(1-\tau)n}}
    \Lpa{1+O\lpa{k^{-1}+(n-k)^{-1}}},
\]
holds uniformly as $k, n-k\to\infty$, where $\tau = k/n$. Outside
this range, $\binom{n}{k}B_{n-k}$ is asymptotically negligible.
\end{proof}

\begin{figure}[!ht]
\begin{center}
\begin{tabular}{cccc}
    \includegraphics[height=3cm]{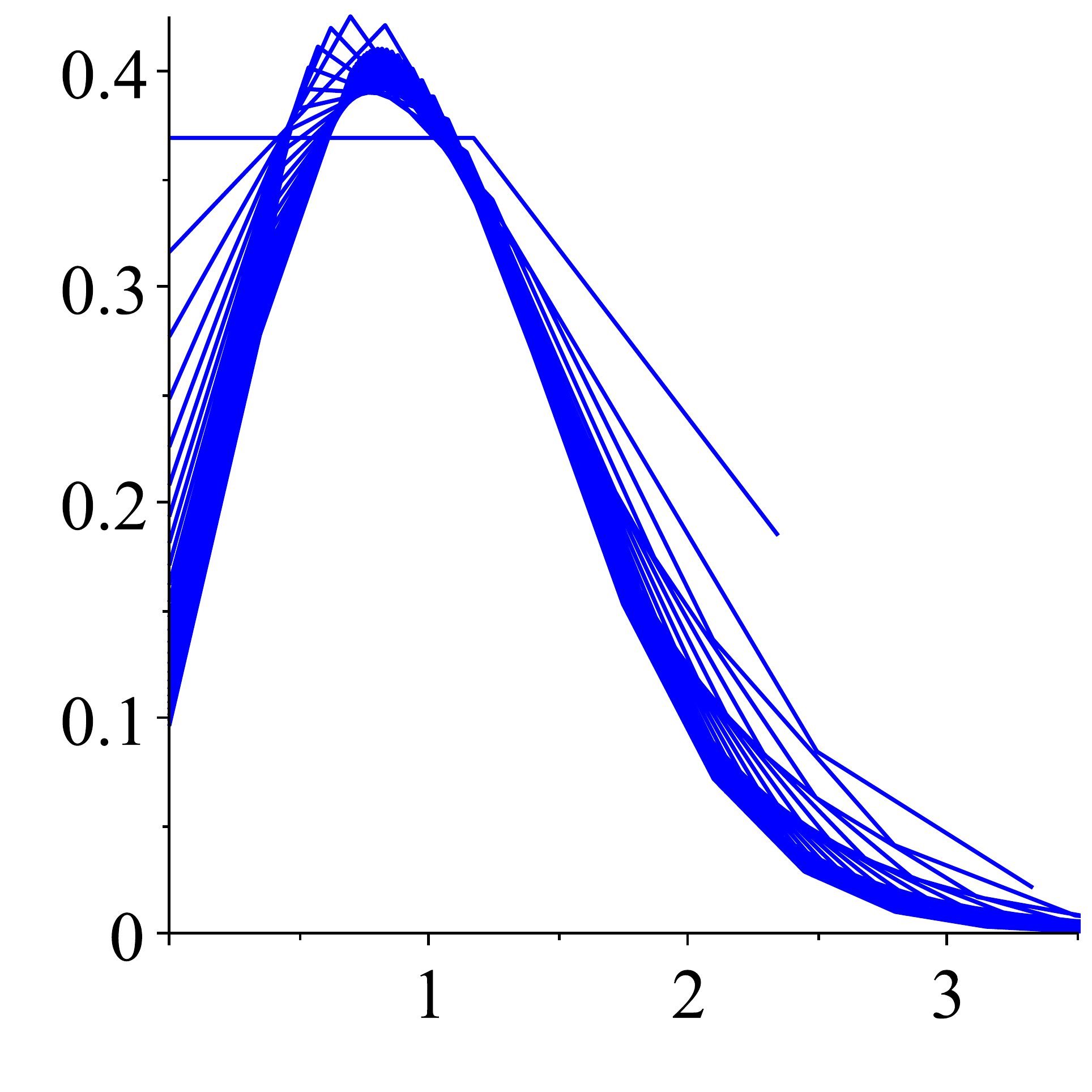} &
    \includegraphics[height=3cm]{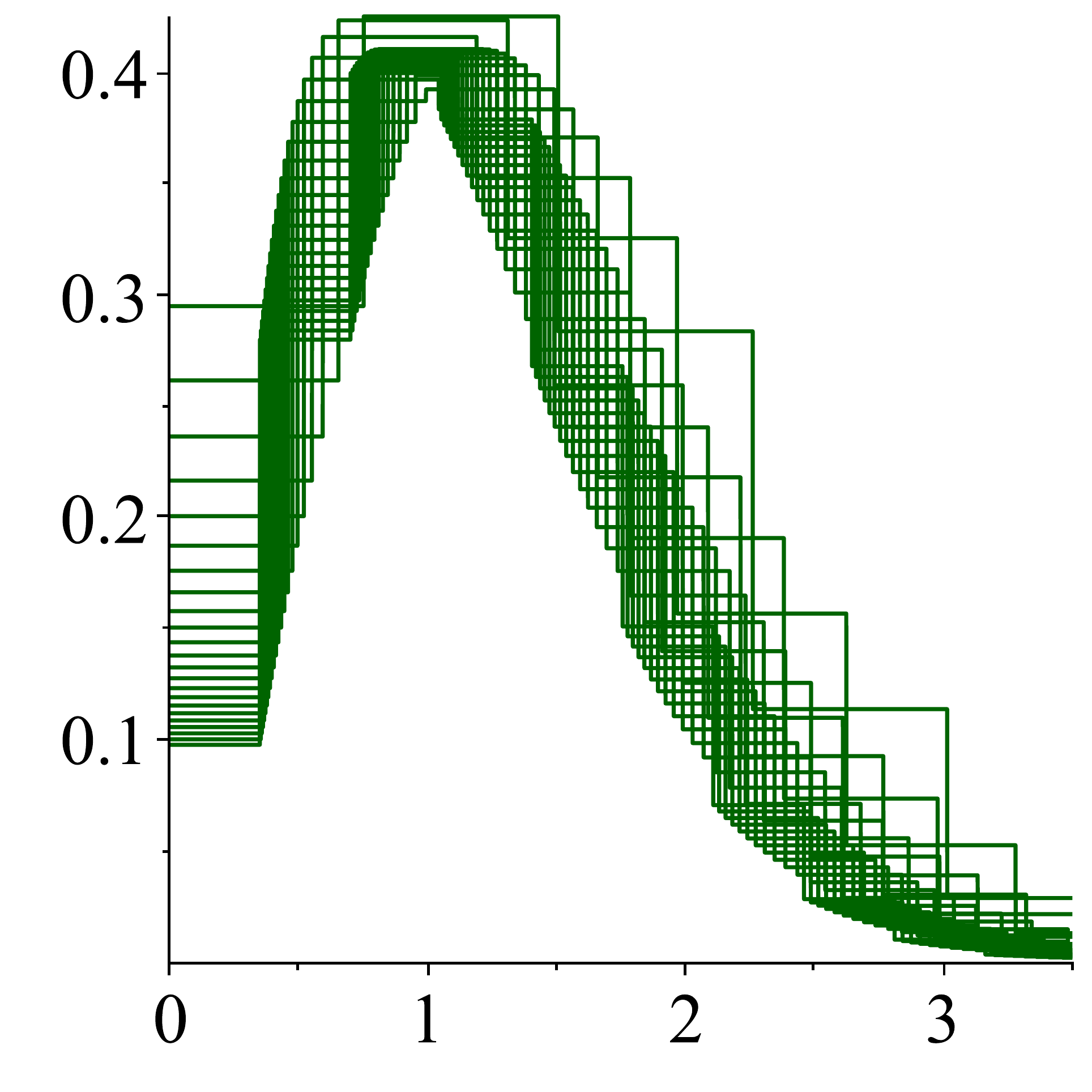} &
    \includegraphics[height=3cm]{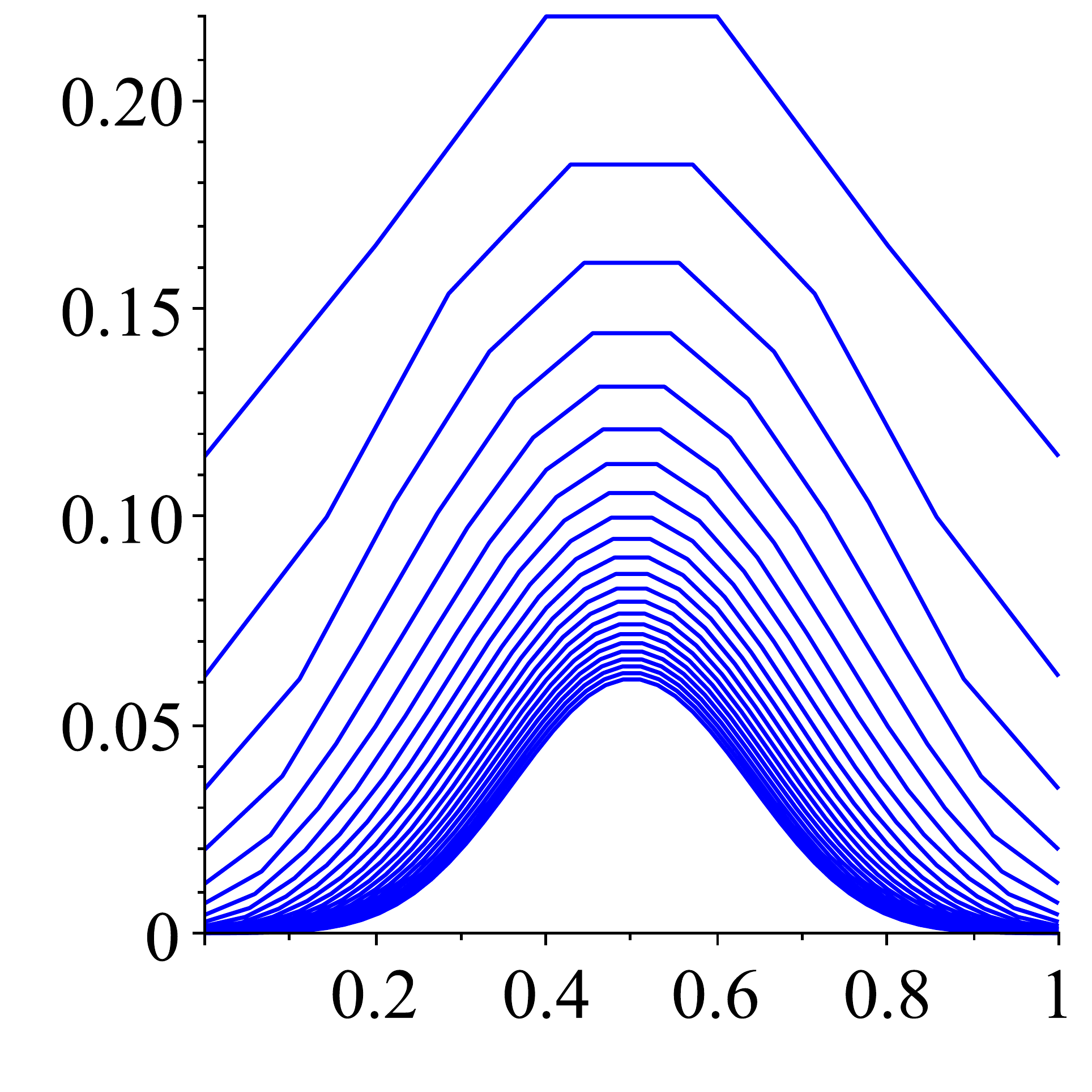} &
    \includegraphics[height=3cm]{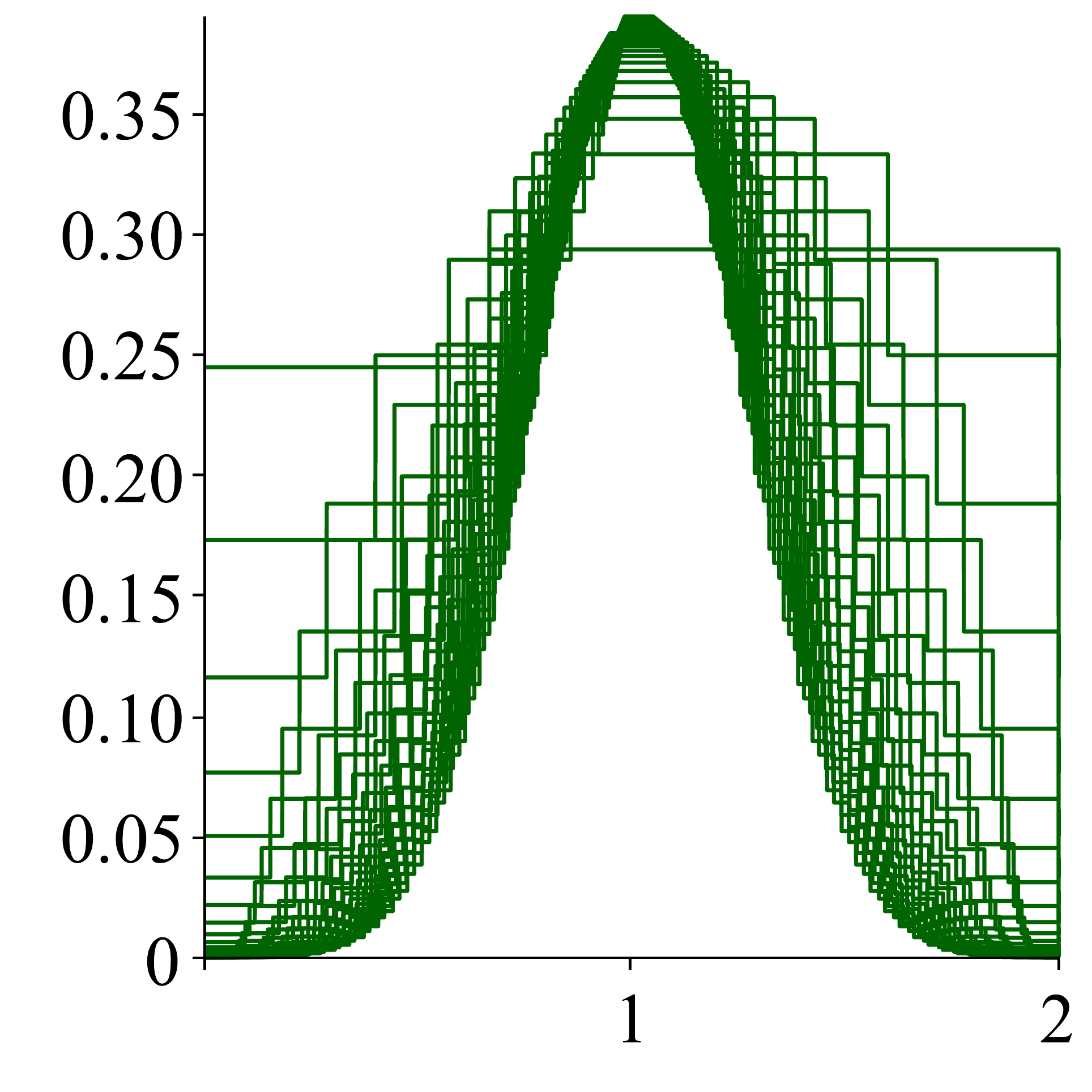}\\
    $\frac{\binom{n}{x\log n}B_{n-x\log n}}{\sum_{1\le j\le n}\binom{n}{j}B_{n-j}}$ &
    $\frac{\binom{n}{\tr{x\log n}}B_{n-\tr{x\log n}}}{\sum_{1\le j\le n}\binom{n}{j}B_{n-j}}$ &
    $\frac{\binom{n}{xn}B_{xn}B_{n-x n}}{\sum_{1\le j\le n}\binom{n}{j}B_j B_{n-j}}$ &
    $\frac{\binom{n}{\lfloor{\frac12xn\rfloor}}B_{\lfloor{\frac12xn\rfloor}}B_{n-\lfloor{\frac12xn\rfloor}}}{\sum_{1\le j\le n}\binom{n}{j}B_j B_{n-j}}$  \\
    \small $n=2\ell, 1\le\ell\le 25$ & \multicolumn{2}{c}{\small $n=2\ell+1, 2\le \ell\le 24$} &\small $n=2\ell+1, 1\le \ell\le 29$
\end{tabular}
\end{center}
\caption{Histograms of Arima numbers $\binom{n}{k}B_{n-k}$ (the left two plots) and
the sequence $\binom{n}{k}B_kB_{n-k}$ (\href{https://oeis.org/A033306}{A033306}; the right two
plots). All plots, except for the third one, are scaled by the standard deviations.}
\end{figure}

\subsection{Asymptotic normality of a few variants}
A few other distributions with the same logarithmic type LLT are
listed in Table~\ref{T:log-oeis}, the proofs being completely similar
and omitted.

\def\arraystretch{1.5}
\begin{table}[!ht]
\begin{center}
    \begin{tabular}{c|ccccc}
        OEIS & 
        \href{https://oeis.org/A078937}{A078937} &
        \href{https://oeis.org/A078938}{A078938} &
        \href{https://oeis.org/A078939}{A078939} &
	    \href{https://oeis.org/A124323}{A124323} & 
	    \href{https://oeis.org/A086659}{A086659} \\ \hline 
        EGF & $e^{2(e^z-1)+vz}$ & 
        $e^{3(e^z-1)+vz}$ &
        $e^{4(e^z-1)+vz}$ &
	    $e^{e^z-1+(v-1)z}$ &
	    $e^{e^z-1+(v-1)z}-e^{vz}$
    \end{tabular}
\end{center}
\caption{A few OEIS sequences (together with their EGFs) with the 
same $\LLT\lpa{\log n,\log n;(\log n)^{-\frac12}}$ asymptotic behavior
as Arima numbers.} \label{T:log-oeis}
\end{table}

In particular, \href{https://oeis.org/A124323}{A124323} enumerates
singletons in set partitions.

These EGFs are reminiscent of $r$-Bell numbers with the EGF
$e^{v(e^z-1)+rz}$ \cite{Broder1984, Mezo2011}; see also
\cite{Hennessy2011} for exponential Riordan arrays of the type
$e^{(\alpha+v)(e^z-1)+(\alpha-\beta)z}$. These types of EGFs lead
however to normal limit laws with mean of order $\frac{n}{W(\frac
n{\alpha+1})}$ and variance of order $\frac{n}{W(\frac n{\alpha+1})}$
when $\alpha>0$ and $\beta\le 2\alpha$, and of order
$\frac{n}{W(n)^2}$ when $\alpha=0$ and $\beta\le0$,
similar to that of $\Stirling{n}{k}$.

\subsection{A less expected $\LLT(\frac12n,
\frac14n\log n)$ for $\binom{n}{k}B_kB_{n-k}$}
On the other hand, from the intuitive reasoning given in
\eqref{E:arima-heuristic} (which can be made rigorous), it is of
interest to scrutinize the corresponding balanced version
$\binom{n}{k}B_kB_{n-k}$, which is sequence \href{https://oeis.org/A033306}{A033306} with the EGF
$e^{e^{vz}+e^z-2}$. In this case, the peak of the distribution is
reached at $k=\tr{\frac12n}$ and $k=\cl{\frac12n}$. Indeed, the mean
is identically $\frac12n$, and the variance can be computed by
\[
	\frac n4 +\frac{n(n-1)\tilde{B}_{n-1}}{4\tilde{B}_n},
\]
where $\tilde{B}_n := n![z^n]e^{2(e^z-1)}$ is sequence
\href{https://oeis.org/A001861}{A001861} in the
OEIS (or the $n$th moment of a Poisson distribution with mean $2$).
By the asymptotic expansion ($\tilde{w}:= W(\frac12n)$)
\begin{align}\label{E:bn}
	\tilde{B}_n = \frac{e^{(\tilde{w}-1+\log 2+\tilde{w}^{-1})n-2}}
	{\sqrt{\tilde{w}+1}}\Lpa{1-\frac{\tilde{w}^2(2\tilde{w}^2+7\tilde{w}+10)}
	{24(\tilde{w}+1)^3n}+O\lpa{n^{-2}(\log n)^2}},
\end{align}
we have that the variance satisfies
\begin{align*}
	\MoveEqLeft
	\frac n4 +\frac{n(n-1)\tilde{B}_{n-1}}{4\tilde{B}_n}
	= \frac{\tilde{w}+1}4\,n-\frac{\tilde{w}(\tilde{w}^2+2\tilde{w}+2)}{8(\tilde{w}+1)^2}
	+O\lpa{n^{-1}(\log n)^2},
\end{align*}
showing the less expected $n\log n$ asymptotic behavior. By
\eqref{E:bn} and Stirling's formula, we obtain, for
$k=\frac12n+\frac12x\sqrt{n(\tilde{w}+1)}$,
\[
	\frac{\binom{n}{k}B_kB_{n-k}}{\tilde{B}_n}
	=\frac{e^{-\frac12x^2}}{\sqrt{\frac12\pi n(\tilde{w}+1)}}
	\Lpa{1+O\Lpa{\frac{\log n}{n}(1+x^4)}},
\]
uniformly in the range $x=o(n^{\frac14}(\log n)^{\frac14})$
(wider than the usual range $o(n^{\frac16})$ due to symmetry of the
distribution). Smallness of the distribution outside this range also
follows from similar arguments, and we deduce that
\[
	\binom{n}{k}B_kB_{n-k}
	\simeq \LLT\Lpa{\frac n2, \frac{n\tilde{w}}{4};
	\frac{\log n}{n}}.
\]

Finally, sequence \href{https://oeis.org/A174640}{A174640}, which
equals $\binom{n}{k}B_kB_{n-k}-B_n+1$ also follows asymptotically the
same behaviors.
\section{Conclusions}

We clarified the two Edo-period procedures in the eighteenth century
in Japan to compute Bell numbers, and derived fine asymptotic
and distributional properties of several classes of numbers arising
in such procedures, shedding new light on the early history and
developments of Bell and related numbers.

In addition to the modern perspectives given in this paper, our study
of these old materials also suggests several other questions. For
example, the change of the asymptotic distributions from $\LLT(\log
n, \log n)$ for $|M_{n,k}|$ to $\LLT(\mu n, \sigma^2n)$ for
$|M_{n,k}|n^k$ suggests examining other weighted cases, say
$\binom{n}{k}|M_{n,k}|$ or more generally $\pi_{n,k}|M_{n,k}|$ for a given
sequence $\pi_{n,k}$, and studying the
corresponding phase transitions of limit laws. This and several
related questions will be explored in detail elsewhere.
\section*{Acknowledgements}

We thank Guan-Huei Duh and Jin-Wen Chen for their assistance in the
long collection process of the literature, as well as the
clarification of the history of Chinese and Japanese Mathematics.
Special thanks go to Yu-Sheng Chang who provided the nice tikz-code
for the Genjik\={o}nozus in Figure~\ref{F:Genjimon1}.

\bibliographystyle{abbrv}
\bibliography{bell-numbers}

\end{CJK}
\end{document}